\def\card{{{\operatorname{card}}}}
\numberwithin{equation}{section}
\theoremstyle{plain}
\newtheorem*{(DQ1)}{(DQ1)}
\newtheorem*{lemma a}{Lemma (a)}
\newtheorem*{lemma b}{Lemma (b)}
\newtheorem*{lemma c}{Lemma (c)}
\newtheorem*{embedding lemma}{Lemma}
\newtheorem*{embedding}{Theorem}
\newtheorem*{lemma1}{Lemma 1}
\newtheorem*{lemma2}{Lemma 2}
\newtheorem*{proposition0}{Proposition M0}
\newtheorem*{proposition1}{Proposition M1}
\theoremstyle{definition}
\theoremstyle{remark}
\newtheorem*{Proposition 1}{Proposition 1}
\begin{document}
\title{Embedding subshifts of finite type into the Fibonacci-Dyck shift}
\author{Toshihiro Hamachi}
\author{Wolfgang Krieger}
\begin{abstract}
A necessary and sufficient condition is given for the existence of an embedding of an irreducible subshift of finite type into the Fibonacci-Dyck shift 
 \end{abstract}
 
\maketitle

%section 1
\section{Introduction}

Let $\Sigma$ be a finite alphabet, and let $S$ be the shift 
on the shift space $\Sigma^{\Bbb Z}$,

$$
S((x_{i})_{i \in \Bbb Z}) =  (x_{i+1})_{i \in \Bbb Z}, 
\qquad 
(x_{i})_{i
\in \Bbb Z}  \in \Sigma^{\Bbb Z}.
$$
An 
$S$-invariant closed subset $X$ of $\Sigma^{\Bbb Z}$ is called a subshift. For an introduction to the theory 
of subshifts see \cite {Ki} or  \cite{LM}. A word is called admissible for the subshift $X \subset  \Sigma^{\Bbb Z}$ if it appears in a point of $X$. A subshift is uniquely determined by its language of admisible word. 

Among the first examples  of subshifts are the subshifts of finite type. A subshift of finite type is  constructed from a finite set $\mathcal F$ of words in the alphabet $\Sigma$ as the subshift that contains the points in $\Sigma^{\Bbb Z}$, in which no word in $\mathcal F$ appears. Other prototypical examples of subshifts are the Dyck shifts. To recall the construction of the Dyck shifts \cite {Kr1}, let $N> 1$, and let 
$$
\alpha_-(n), \alpha_+(n),\quad  0 \leq n < N,
$$
be the generators of the Dyck inverse monoid \cite {NP} $\mathcal D_N$ with the relations
$$
\alpha_-(n) \alpha_+(m) =
\begin{cases}
\bold 1, &\text{if  $n = m$}, \\
0, &\text {if $n \neq m$}.
\end{cases}
$$
The Dyck shifts  are defined as the subshifts
$$
D_N \subset( \{ \alpha_-(n): 0 \leq n < N \} \cup\{ \alpha_+(n):0 \leq n < N \})^\Bbb Z
$$
with the admissible words $(\sigma_i)_{1 \leq i \leq I  } , I \in \Bbb N,$ of $D_N, N > 1,$ given by the condition 
$$
\prod_{1 \leq i \leq I } \sigma_i \neq 0.
$$
In \cite{HI} a necessary and sufficient condition was given for the existence of an embedding of an irreducible subshift of finite type into a Dyck shift. In \cite {HIK} this result was extended to a wider class of target shifts that contains the $ \mathcal D_N$-presentations. 
With  the semigroup $\mathcal D_N^-$ ($\mathcal D_N^+$) that is freely  generated by $ \{ \alpha_-(n): 0 \leq n < N \} $ $ ( \{ \alpha_+(n): 0 \leq n < N \} )$, 
$\mathcal D_N$-presentations can be described as arising from a finite irreducible directed labelled graph with vertex set $\mathcal V$ and edge set $\Sigma$ and a label map 
$$
\lambda: \Sigma \rightarrow \mathcal D_N^- \cup \{\bold 1\} \cup\mathcal D_N^+,
$$
that extends
to directed paths 
$
b = (b_i)_{1\leq i \leq I}, I > 1,
$
in the directed graph $(\mathcal V,\Sigma)$
by 
$
\lambda (b) = \prod_{1\leq i \leq I} \lambda(b_i).
$
It is required that there exists for  $U, W \in \mathcal V$, and for $\beta \in \mathcal D_N,$  a path $b$ from $U$ to $W$ such that $\lambda(b) =Ê\beta$. 
The $\mathcal D_N$-presentation $X(\mathcal V, \Sigma, \lambda)$ is the subshift with alphabet $\Sigma$ and with the set of admissible words given by the set of directed finite paths $b$ in the graph $(\mathcal V, \Sigma, \lambda)$ such that $\lambda (b) \neq 0$. 

For Dyck shifts the notion of a multiplier was introduced in \cite {HI}. A multiplier of the Dyck shift $D_N, Ý N\in \Bbb N,$ or more generally of a ${\mathcal D_N}$-presentation \cite {HIK}, is an equivalence class of primitive words in the symbols $\alpha(n), 0 \leq n \leq N$. Here a word is called primitive if it is not the power of another word, and two primitive words are equivalent, if one is a cyclic permutation of the other. The multipliers are the primitive necklaces of combinatorics \cite [Section 4]{BP}. As a notation for a multiplier we use one of its representatives.

We denote the period of a periodic point $p$ of a $\mathcal D_N$-presentation $X(\mathcal V, \Sigma, \lambda)$ by  $\Pi(p)$. A periodic point $p =(p_i)_{i \in \Bbb Z}$ of a $\mathcal D_N$-presentation $X(\mathcal V, \Sigma, \lambda)$, and its orbit, are said to be 
neutral if there exists an $i \in \Bbb Z$ such that $\lambda( (p_j)_{i\leq j < i+\Pi(p)})=
 \bold 1 $, and they are said to have negative (positive) multplier, if there exists an $i \in \Bbb Z$ such that $\lambda( (p_j)_{i\leq j < i+\Pi(p)})\in$ $\mathcal D_N^-$$ ( \mathcal D_N^+)$.
More precisely, given  a multiplier $\mu$,  a periodic point $p =(p_i)_{i \in \Bbb Z}$ of a
$\mathcal D_N$-presentation $X(\mathcal V, \Sigma, \lambda)$, and its orbit, are said to have  (negative)  multiplier $\mu_-$, if  there exists an $i \in \Bbb Z$ and a representatve 
$(\alpha(n_j))_{1 \leq j \leq J}$ 
of $\mu$
such that $\lambda( (p_j)_{i\leq j < i+\Pi(p)})$ is equal to $\prod_{1 \leq j \leq J}\alpha_-(n_j)
$,
and are said to have  (positive)  multiplier $\mu_+$, if  there exists an $i \in \Bbb Z$ and a representative 
$(\alpha(n_j))_{1 \leq j \leq J}$ 
of $\mu$
such that $\lambda( (p_j)_{i\leq j < i+\Pi(p)})$ is equal to 
$\prod_{J \geq  j \geq 1}\alpha_+(n_j)$.
 Denote the set of  periodic orbits   of length  $n$ of the  $\mathcal D_N$-presentation 
 $X(\mathcal V, \Sigma, \lambda)$ with negative (positive) multiplier  by 
 $\mathcal O_n^-(X(\mathcal V, \Sigma, \lambda)$    ($\mathcal O_n^+(X(\mathcal V, \Sigma, \lambda)$), 
  and  denote the  set of its periodic orbits  of length $n$ with  multiplier $\mu_-$($\mu_+$)  by  
  $\mathcal O_n( \mu_-)(X(\mathcal V, \Sigma, \lambda)$    ($\mathcal O_n( \mu_+)  (X(\mathcal V, \Sigma, \lambda)$).
The notion of an exceptional multiplier was introduced in \cite{HI}.
 A multiplier $\mu$ of a $\mathcal D_N$-presentation $X(\mathcal V, \Sigma, \lambda)$ is said to be exceptional at period $n\in \Bbb N$ if
\begin{multline*}
\card  ( \mathcal O_n( \mu_-)(X(\mathcal V, \Sigma, \lambda)) \cup \mathcal O_n( \mu_+)(X(\mathcal V, \Sigma, \lambda)) ) > \\
\max \{ \card (\mathcal O_n^-(X(\mathcal V, \Sigma, \lambda))), \card (\mathcal O_n^+(X(\mathcal V, \Sigma, \lambda))) \}.
\end{multline*}

The Fibonacci-Dyck shift  is the Markov-Dyck shift \cite {M} of the Fibonacci graph. We introduce the  Fibonacci graph as the directed graph with vertex set $\{ 0, 1 \}$ and edge set 
$\{\beta (0), \beta, \beta(1)\}$: $\beta (0)$ is a loop at vertex $0$, the edge 
$\beta (1)$ goes from vertex $1$ to vertex $0$, and the edge $\beta$ from vertex $0$ to vertex $1$. 
Let $(  \{ 0, 1 \},\{\beta^- (0), \beta^-, \beta^-(1)\}  ) $ be a copy of the Fibonacci graph, and let 
$(  \{ 0, 1 \},\{\beta^+ (0), \beta^+, \beta^+(1)\} ) $  be its reversal. The Fibonacci-Dyck shift  is given by a $\mathcal D_2$-presentation, with
$$
 \mathcal V = \{0, 1  \},  \quad
\Sigma = \{\beta^- (0),\beta^- ,\beta^- (1), \beta^+ (0),\beta^+ ,\beta^+ (1)  \},
$$
and 
$$
\lambda: \Sigma \rightarrow \{\alpha_-(0) ,\alpha_-(1), \alpha_+(0) ,\alpha_+(1)  \},
$$
given by
\begin{align*}
\lambda ( \beta^- (0) ) =  \alpha_- (0) ,   \lambda (\beta^+ (0)  ) =\alpha_+ (0),
\end{align*}
$$
\lambda ( \beta^- (1) ) =  \alpha_- (1) ,   \lambda (\beta^+ (1)  ) =\alpha_+ (1),
$$
$$
\lambda ( \beta_-) =  \lambda (  \beta_+ ) = \bold 1.
$$
For this $\mathcal D_2$-presentation we chose a label map $\lambda$ that assigns the label $\bold 1$ to the edges 
$\beta^-$  and $\beta^+$. These edges $\beta^-$  and $\beta^+$ arise from the splitting of the edge $\beta$ in the Fibonacci graph. This edge $\beta$  is contracted to a vertex in the procedure that turns the Fibonacci graph into the 1-vertex graph with two loops,
whose graph inverse semigroup is $\mathcal D_2$. 
(For this part of the theory see \cite {Kr2, HK}. In \cite [Section 2] {HK} it is shown that the Fibonacci-Dyck shift has Property $(A)$, and in \cite [Section 3] {HK}  it is shown that its associated semigrop is 
$\mathcal D_2$.
The procedure is also described in \cite [Section 3]{HK}).
In this paper we consider exclusively the Fibonacci-Dyck shift $F$. 

The label map $\lambda$ can be written in the form of a matrix with entries in the semigroup ring of 
$\mathcal D_2$:
$$
\begin{pmatrix}
a^-(0) +    a^+(0)               & \bold 1 +a^+(1)
\\
 \bold 1 +a^-(1) & 0
\end{pmatrix}.
$$
Taking the adjoint and applying the involution of the semigroup ring  of $\mathcal D_2$ to its entries, does not change the matrix. This symmetry property of the matrix makes visible the time reversal 
$\rho$ of $F$, that is given by  setting
$$
\chi (\beta^-  ) = \beta^+ ,
\chi (\beta^+  ) = \beta^- ,
$$
$$
\chi (\beta^- (0) ) = \beta^+(0) ,
\chi (\beta^+(0)  ) = \beta^-(0) ,
$$
$$
\chi (\beta^-(1)  ) = \beta^+(1) ,
\chi (\beta^+(1)  ) = \beta^-(1) ,
$$
and
$$
\rho ((x_i)_{i\in \Bbb Z}) = (\chi(x_{-i}))_{i\in \Bbb Z}, \qquad x \in F.
$$ 

We denote the set of multipliers 
 of $F$ by 
$\mathcal M$.
The time reversal $\rho$ maps the set $\mathcal O_n(\mu_- )$ bijectively onto the set $\mathcal O_n(\mu_+ )$ and we can note 
a lemma:
\begin{lemma a}
  \begin{align*}
  \card (\mathcal O_n(\mu_- ) ) = \card ( \mathcal O_n(\mu_+ )), \qquad  n \in \Bbb N,  \mu \in \mathcal M.
  \end{align*} 
\end{lemma a}

We also note orbit counts of the Fibonacci-Dyck shift for small periods as a lemma:
  \begin{lemma b} 
   \begin{align*}
   & 
 \card ( \mathcal O _3 (\alpha^-(0)  )    ) = 2,
 \\
  &  \card ( \mathcal O _5 (\alpha^-(0) )    )  = 9,
   \\
 &\card ( \mathcal O _5 (\alpha^-(0)\alpha^-(1)  )    ) = 4.  
  \\
 \
  \\
   &\card ( \mathcal O _4 (\alpha^-(0)  )    )=2,
 \\
&  \card ( \mathcal O _4 (\alpha^-(1) )    ) = 2 ,
 \\
& \card ( \mathcal O _6 (\alpha^-(0)   )    )= 11,
\\
&\card ( \mathcal O _6 (\alpha^-(1) )    )=11.
\end{align*}

 \end{lemma b}
 
 We note a consequence of Lemma (a) also as a lemma: 
 \begin{lemma c}
A multiplier $\mu \in \mathcal M$  is  exceptional at period $n\in \Bbb N$ if and only if
$$
   \card \ ( \mathcal O_n(\mu_-) )>  \card (\bigcup_{\widetilde{\mu} \in \mathcal M\setminus
   \{ \mu \}}  \mathcal O_n(\widetilde{\mu}_-)  ).
$$
 \end{lemma c}
  
The Fibonacci-Dyck shift has the exceptional multiplier $\alpha(0)$, that is exceptional at period one (and in view of Lemma (b) also at periods three, and five),  and the exceptional multiplier $\alpha(1)$, that is exceptional at period two.
After introducing notation and terminology in section 2, we show in section 3  
that the multiplier $\alpha(0)\alpha(1)$  is not exceptional.  In section 4, we prove that the remaining multipliers are not exceptional. Based on these results, and on the results of \cite {HIK}, we give in section 5 a necessary and sufficient condition for the existence of an embedding of an irreducible subshift of finite type into the Fibonacci-Dyck shift. The multiplier, especially the exceptional multiplier enter here in an essential way.
Moreover, we show  in section 6, that the multiplier $\alpha (0)$ is exceptional  only at periods one, three, and five,  and in section 7, that the multiplier $\alpha (1)$ 
is exceptional 
only at period two. 

We denote the set of periodic  points $p \in F$ with smallest period $n\in \Bbb N$ by $P^\circ_n$, and we  denote the set of points $p \in P^\circ_n$  with negative multiplier 
$\mu\in\mathcal M $ by $P^\circ_n(\mu_-).$
In the proofs of Sections 3, 4,   and  6, 7 we construct for the multiplier $\mu$ in question, and for a suitably chosen period $k\in \Bbb N$,  shift commuting injections
$$
\eta_n :P_n^\circ(\mu_-)
\rightarrow
 \bigcup_{\tilde\mu\in \mathcal M\setminus \{ \mu\}}
P^\circ_n(\tilde\mu_-), \quad n >k.
$$
This we do by first constructing a partition of  $P_n^\circ(\mu), n >k,$ (some  sets of which may be empty), together with a  shift commuting injection of each set of the partition into  $\bigcup_{\tilde\mu\in \mathcal M\setminus \{ \mu\}}
P^\circ_n(\tilde\mu_-),$ where we show injectivity on each set  by describing how a point  can be reconstructed from its image under $\eta_n$. Then we show that the  images under $\eta_n$  of the sets of the partition are disjoint.

%section 2
\section{Preliminaries}

We denote the set of admissible words of the Fibonacci-Dyck shift by $\mathcal L$. The empty word we denote by $\epsilon$, and  the length of a word we denote by $\ell$.

We denote by $\mathcal C(0)$ the circular code of words $c=(c_i)_{1\leq i \leq I}\in \mathcal L, I > 1$,  that begin with the symbol $\beta^-(0)$ and that are 
such that $\lambda(c) = \bold 1,$ and such that for no index $J,1 < J < I,$ one has that
$$
\prod _{1\leq j \leq J}\lambda (c_j) = \bold 1.
$$
Also we denote by $\mathcal C(1)$ the circular code \cite [Section 7]{BPR} of words $c=(c_i)_{1\leq i \leq I}\in \mathcal L), I > 3$, that begin with the symbol $\beta^-$ and end with the word $\beta^+(1)\beta^+$, and that are such that $\lambda(c) = \bold 1$, 
and such that for no index $J,1 < J < I,$ one has that $  c_J=  \beta^+ $, and 
$$
\prod _{1\leq j \leq J}\lambda (c_j) = \bold 1.
$$
We also have the circular codes
$$
\mathcal C = \mathcal C(0) \cup  \{ \beta^- \beta^+ \} \cup \mathcal C(1),
$$
and 
$$
 \mathcal C^\circ(1) = \beta^-(1)\mathcal C^\ast  \beta^+(1).
$$
Note that
$$
\mathcal C(0) =  \beta^-(0)\mathcal C^\ast \beta^+(0),
\qquad
\mathcal C(1) =  \beta^-( \mathcal C^\circ(1)^\ast \setminus \{ \epsilon \})\beta^+.
$$

A bijection
$$
\Psi_\circ: \mathcal C^\circ(1)   \to \mathcal C(0)
$$
is given by
$$
\Psi_\circ(\beta^-(1)f\beta^+(1) )= \beta^-(0)f\beta^+(0) , \quad f\in \mathcal C^\ast.
$$
The bijection $\Psi_\circ: \mathcal C^\circ(1)   \to \mathcal C(0)$ extends to a bijection
$$
\Psi: \mathcal C^\circ(1)^\ast   \to \mathcal C(0)^\ast
$$
by
$$
\Psi((c^\circ_k)_{1\leq k\leq K})=
 ((   \Psi_\circ ( c^\circ_k ))_{1\leq k\leq K}, \qquad c^\circ_k\in\mathcal C^\circ(1),
  1\leq k\leq K, \quad K \in \Bbb N.
$$

We set
$$
\mathcal B(1)= \beta^- \mathcal C^\circ(1)^\ast \beta^-(1),
$$
and we define a bijection
$$
\Xi:\mathcal B(1) \rightarrow\mathcal C(0),
$$
by
$$
\Xi( \beta^-f^\circ \beta^-(1)  ) =\beta^-(0)\Psi( f^\circ  )    \beta^+(0), \qquad f^\circ \in
\mathcal C^\circ(1)^\ast .
$$

We also set
$$
\mathcal B(0,0)= \beta^-(0) \mathcal C^\ast \beta^-(0),
$$
and we define a bijection
$$
\Phi_0:\mathcal C(0)  \rightarrow \mathcal B(0,0),
$$
by
$$
\Phi_0(\beta^-(0) f  \beta^+(0)   ) = \beta^-(0) f  \beta^-(0), \qquad  f\in  \mathcal C^\ast.
$$
We also set
$$
\mathcal B(1,1)= \mathcal B(1) \mathcal C^\ast \beta^-\beta^-(1),
$$
and we define an bijection
$$
\Phi_1: \mathcal C(1)  \rightarrow  \mathcal B(1,1),
$$
by
$$
\Phi_1( \beta^-f^\circ\beta^-(1)f\beta^+(1)  \beta^+ ) =  \beta^-f^\circ\beta^-(1)f
  \beta^- \beta^-(1),  \quad f^\circ \in \mathcal C^\circ(1)^\ast,    f\in \mathcal C^\ast .
$$

We set
$$
\mathcal Q_0 = 
(\mathcal C(0) \cup  \{\beta^- \beta^+ \} )^\ast\setminus  \{\beta^- \beta^+ \}^\ast,
$$
and we  define an injection
$$
\Delta_0: \mathcal Q_0\rightarrow   \mathcal L.
$$
For this we let $f \in  \mathcal Q_0$, 
$$
f =(c_k)_{1\leq k\leq K}, \qquad c_k \in \mathcal C(0) \cup  \{\beta^- \beta^+ \} ,1\leq k\leq K,\quad K \in \Bbb N,
$$
set
$$
k_\circ (f) = \max \{k \in [1, K]: c_k \in \mathcal C(0)  \},
$$
and set
$$
\Delta_0(f) = ((c_k)_{1 \leq k <k_{\circ}(f) }   , \Phi_0  ( c_{k_{\circ}(f)}    ) ,
(c_k)_{k_{\circ}(f)<k\leq K }).
$$

We also set
$$
\mathcal Q_1 =  \mathcal C^\ast
\setminus (  \mathcal C(0) \cup\{\beta^- \beta^+ \})^\ast,
$$
and  define an injection
$$
\Delta_1: \mathcal Q_1\rightarrow   \mathcal L
$$
For this we let $f \in  \mathcal Q_1$, 
$$
f =(c_k) _{1\leq k\leq K}, \qquad c_k \in 
 \mathcal C,
1\leq k\leq K,\quad K > 1,
$$
set
$$
k_\circ (f) = \max \{k \in [1, K]: c_k \in \mathcal C(1)  \},
$$
and set
$$
\Delta_1(f) = ((c_k)_{1 \leq k <k_{\circ}(f) }   , \Phi_1  ( c_{k_{\circ}(f)}    ) ,
(c_k)_{k_{\circ}(f)<k\leq K }).
$$

We put a linear order on the alphabet  of $F$. The resulting lexicgraphic order on 
$\mathcal L$  will be used to single out an element of $\Bbb Z/n \Bbb Z $,
when constructing the shift commuting maps
$$
\eta_n :P_n^\circ(\mu_-)
\rightarrow
 \bigcup_{\tilde\mu\in \mathcal M\setminus \{ \mu\}}
P^\circ_n(\tilde\mu_-), \quad n >k.
$$
If a word appears in a point $p \in P^\circ_n$ with its last symbol at index $i \in \Bbb Z$ then we say that the word appears at index  $i$. 
For $p \in F$ we denote by $\mathcal I^{(0)}(p) $
($   \mathcal I^{(1)}(p)$)
the set of indices $i\in \Bbb Z$ such that $p_i = \beta_-(0)$($ \beta_-(1) $) and
$$
\lambda (p_{[i, i+k]}) \neq \bold 1,\quad k \in  \Bbb N.
$$
We say that a word appears openly in $p \in F$ if it appears at an index $i\in \mathcal I^{(0)}(p)
\cup      \mathcal I^{(1)}(p)$.

For an element $\gamma $ of the free monoid that is generated by $\alpha(0)$ and $\alpha(1)$ (or by $\alpha_-(0)$ and $\alpha_-(1)$), e.g. for
$$
\gamma = \prod_{0\leq n < N}\alpha(0)^{K(0,n)}\alpha(1)^{K(1,n)}, \qquad 
K(0,n),K(1,n) \in \Bbb Z_+,  \ 0\leq n < N,
$$
we use the notation
$$
\nu_0 ( \gamma) =\sum_{0\leq n < N}K(0,n), \quad\nu_1 ( \gamma) =\sum_{0\leq n < N}K(1,n),
$$
and, choosing for $\gamma$ any representative of the multiplier $ \mu  \in \mathcal M,$ we set
$$
\nu_0 (\mu) =  \nu_0 ( \gamma)  , \quad \nu_1(\mu) = \nu_1(\gamma).
$$
A point
$
p \in P^\circ_n(\mu_-), \mu \in \mathcal M,
$
determines a $\kappa_p\in \Bbb N$ by
$$
(\nu_0(\lambda(p_{[0, n)})),\nu_1(\lambda(p_{[0, n)}))   ) = \kappa_p(\nu_0(\mu),\nu_1(\mu   )). 
$$

For  
 $$
 b =  \beta^-f^\circ \beta^-(1)\in \mathcal B{(1)} ,
 $$
we set 
 $$
 \Lambda (b)= \ell(f^\circ ),
 $$
 and for
 $$
 b =  \beta^-(0)f \beta^-(0)\in \mathcal B{(0,0)} ,
 $$
 we set
 $$
  \Lambda (b)= \ell(f).
  $$

We also define a subset  $\mathcal D{(1,1)}$ of $\mathcal B{(1)}\mathcal C^\ast\mathcal B{(1)}$ by
\begin{align*}
\mathcal D{(1,1)} =
 \{ \beta^-f^{\circ,-}  \beta^-(1)f \beta^-f^{\circ,+}  \beta^-(1)
\in \beta^-\mathcal C^\circ(1)^\ast  \beta^-(1)& \mathcal C^\ast  \beta^-(0)\mathcal C^\circ(1)^\ast  \beta^-(1): \\
& \ell(  f)   \geq  \ell(f^{\circ,-} )  , \ell( f^{\circ,+} ) \},
\end{align*}
and a subset  $\mathcal D{(0,1)}$ of $ \beta^-(0)\mathcal C^\ast\mathcal B{(1)}$ by
\begin{align*}
\mathcal D{(0,1)} = \{  \beta^-(0)f \beta^-f^\circ \beta^-(1)\in\beta^-(0)\mathcal C^\ast\beta^-\mathcal C^\circ(1)^\ast \beta^-(1):  \ell ( f  )\geq \ell ( f^\circ )\},
\end{align*}
as well as a subset  $\mathcal D{(1,0)}$ of $\mathcal B{(1)}\mathcal C^\ast \beta^-(0)$ by
\begin{align*}
\mathcal D{(1,0)} =\{ \beta^-f^\circ \beta^-(1)f \beta^-(0) \in\beta^-\mathcal C^\circ(1)^\ast \beta^-(1)\mathcal C^\ast\beta^-(0): \ell ( f  )\geq \ell ( f^\circ )\},
\end{align*}
and for
  $$
 d =  \beta^-f^{\circ,-}  \beta^-(1)f \beta^-f^{\circ,+}  \beta^-(1) \in \mathcal D{(1,1)} ,
 $$
 and 
 $$
 d =  \beta^-(0)f \beta^-f^\circ \beta^-(1) \in \mathcal D{(0,1)},
 $$
 and
 $$
 d = \beta^-f^\circ \beta^-(1)f \beta^-(0) \in \mathcal D{(1,0)} ,
 $$
we set 
  $$
 \Lambda (d)= \ell(f).
 $$
 
For a point $p\in P_n^\circ(F)$ we denote by $\Lambda (p)$ be the maximal value of $\Lambda(d)$ of words 
 $$
 d \in\mathcal B{(0,0)} \cup  \mathcal B{(1)} \cup\mathcal D{(1,1)} \cup \mathcal D{(0,1)} \cup \mathcal D{(1,0)}
 $$
 that appear openly in $p$, and we denote by   $\mathcal J^{(0,0)} (p)$$(\mathcal J^{(1)}   (p),\mathcal J^{(1,1)} (p), 
 \mathcal J^{(0,1)} (p)$,  $\mathcal J^{(1,0)} (p)$)
  the set of  indices, at which there appears in
   $p$ openly a word $d\in \mathcal B{(0,0)} $ ($d\in \mathcal B{(1)}, d \in\mathcal D{(1,1)},d\in\mathcal D{(0,1)},d \in\mathcal D{(1,0)}$)  such that  $\Lambda (p)=\Lambda (d)$, and we 
denote   by  $\mathcal J_\circ^{(0,0)} (p)$($  \mathcal J_\circ ^{(1)}   (p),
 \mathcal J_\circ ^{(1,1)} (p), 
  \mathcal J_\circ^{(0,1)} (p)$,  $\mathcal J_\circ^{(1,0)} (p)$)
the set of   indices $j_\circ \in \mathcal J^{(0,0)}(p)$ $(\mathcal J^{(1)}   (p),\mathcal J^{(1,1)} (p), 
 \mathcal J^{(0,1)} (p)$,  $\mathcal J^{(1,0)} (p)$) such that the word 
 $p_{(j_\circ -n, j_\circ ]}$ is lexicographically  \ the smallest  \ one among \ the words \  
$
p_{(j-n, j]}, j \in  \ \mathcal J^{(0,0)}(p)(\ \mathcal J^{(1)}  (p),\ \mathcal J^{(1,1)} (p),$ $ 
\mathcal J^{(0,1)} (p),  \mathcal J^{(1,0)} (p)).
$

%section 3
\section{The multiplier $\alpha(0)\alpha(1)$}

\begin{lemma1} 
The multiplier $\alpha(0)\alpha(1)$  is not exceptional for the Fibonacci-Dyck shift.
\end{lemma1}

\begin{proof} 
By Lemma (b) the multiplier $\alpha(0)\alpha(1)$ is not exceptional for the Fibonacci-Dyck shift for periods three and five.

We construct shift commuting injections
$$
\eta_n  :P_n^\circ(\alpha_-(0)\alpha_-(1))
\rightarrow
 \bigcup_{\tilde\mu\in \mathcal M\setminus \{\alpha(0)\alpha(1)\}}
P^\circ_n(\tilde\mu_-),    \quad n > 5.
$$

Let $m> 2$.
Let $P^{(0)}_{2m+1}$ be the set of $ p\in P^\circ_{2m+1}(\alpha_-(0)\alpha_-(1)) $, such that $\kappa_p= 1$,
which means that 
$$
p_{(i-2m-1, i ]} \in\mathcal C^\ast\beta^-(0)\mathcal C^\ast\beta^-\mathcal C^\circ(1)^\ast\beta^-(1),
 \quad i \in  \mathcal I^{(1)}(p),
$$
and for  $p\in P^{(0)}_{2m+1}$   let the words 
$$
  f^-(p),
f^+(p)\in \mathcal C^\ast,  \qquad  f^\circ(p)\in \mathcal C^\circ(1)^\ast, 
 $$
be given by writing
$$
p_{(i-2m-1, i ]} = f^-(p)\beta^-(0)f^+(p)\beta^- f^\circ(p)\beta^-(1), \quad i \in \mathcal I^{(1)}
(p).
$$

We set
$$
P^{[1]}_{2m+1} = \{p \in P^{(0)}_{2m+1}:  f^+(p) \in \mathcal Q_1   \}.
$$

The shift commuting map $\eta_{2m+1}$ is to map a point $p\in P^{[1]}_{2m+1}$ to the point 
$q \in P^\circ_{2m+1}$
that is given by
$$
q_{(i-2m-1, i ]} =f^-(p)\beta^-(0)\Delta_1(f^+(p)) \beta^- f^\circ(p)\beta^-(1), \qquad i \in \mathcal I^{(1)}(p).
$$

For $q\in  \eta_{2m+1}(P^{[1]}_{2m+1})$ one has
$$
q_{(i-2m-1, i ]}\in \mathcal C^\ast\mathcal B(1,1)\mathcal C^\ast\mathcal B(1) \mathcal C^\ast
\beta^-(0), \qquad  i \in \mathcal I^{(0)}(q).
$$  
and with the words
$$
g(q), g^{-}(q)\in \mathcal C^\ast,  g^{+}(q) \in (\mathcal C(0)\cup \{\beta^-\beta^+ \})^\ast,
  \quad b(q)\in\mathcal B(1,1),h(q)\in\mathcal B(1),
  $$
that are given by
$$
q_{(i-2m-1, i ]}=   g^{-}(q)b(q)g^{+}(q)h(q)g(q) \beta^-(0), \quad i \in \mathcal I^{(0)}(q),
$$
the point  $p\in P^{[0]}_{2m+1}$ can be reconstructed from its image $q$ under $\eta_{2m+1}$ as the point in $ P^{(0)}_{2m+1}$ that is given by
$$
p_{(i-2m-1, i ]}=  g^{-}(q)\Phi_1^{-1}(b(q))g^{+}(q)h(q)g(q) \beta^-(0), \quad i \in \mathcal  I^{(0)}(q).
$$ 

We note that
  \begin{multline*}
  \nu_0(\lambda( \eta_{2m+1}(p)_{[0, 2m+1)}) )= 1, \quad 
   \nu_1(\lambda( \eta_{2m+1}(p)_{[0, 2m+1)}))  = 3,    \tag {P.01[1]}\\
  p \in P^{[1]}_{2m+1}.
 \end{multline*}

We set
$$
P^{[0]}_{2m+1} = \{p \in P^{(0)}_{2m+1}:  f^+(p) \in
 \mathcal Q_0   \}.
$$

The shift commuting map $\eta_{2m+1}$ is to map a point $p\in P^{[0]}_{2m+1}$ to the point 
$q \in P^\circ_{2m+1}(F)$
that is given by
$$
q_{(i-2m-1, i ]} =f^-(p)\beta^-(0)\Delta_0(f^+(p)) \beta^- f^\circ(p)\beta^-(1), \qquad i \in \mathcal I^{(1)}(p).
$$

For $q\in  \eta_{2m+1}(P^{[0]}_{2m+1})$ one has
$$
q_{(i-2m-1, i ]}\in \mathcal C^\ast\beta^-(0)\mathcal B(0,0)
\mathcal C^\ast\beta^- \mathcal C^\circ(1)^\ast\beta^-(1), \qquad  i \in \mathcal I^{(1)}(q).
$$  
and with the words
$$
 g(q)\in  \mathcal C^\ast,g^{-}(q)\in  (\mathcal C(0)\cup \{\beta^-\beta^+ \})^\ast ,g^+(q) 
 \in\{\beta^-\beta^+ \})^\ast, 
 $$
 $$
 b(q) \in \mathcal B(0,0),
   g^\circ(q)\in\mathcal C^\circ(1)^\ast,
  $$
that are given by
$$
q_{(i-2m-1, i ]}= g(q)\beta^-(0)g^{-}(q)b(q)g^+(q)
\beta^- g^\circ(q)\beta^-(1), \quad i \in \mathcal I^{(1)}(q),
$$
the point  $p\in P^{[0]}_{2m+1}$ can be reconstructed from its image $q$ under $\eta_{2m+1}$ as the point in $ P^{(0)}_{2m+1}$ that is given by
$$
p_{(i-2m-1, i ]}=   g(q)\beta^-(0)g^{-}(q)\Phi_0^{-1}(b(q))g^+(q)
\beta^- g^\circ(q)\beta^-(1),\quad i \in \mathcal  I^{(1)}(q).
$$ 

We note that
  \begin{multline*}
  \nu_0(\lambda(\eta_{2m+1}(p)_{[0, 2m+1)}) )= 3,  \quad
   \nu_1(\lambda(\eta_{2m+1}(p)_{[0, 2m+1)}))  = 1,      \tag {P.01[0]}  \\
  p \in P^{[1]}_{2m+1}.
 \end{multline*}

We set
$$
P^{[\beta]}_{2m+1} =  P^{(0)}_{2m+1}
\setminus ( P^{[1]}_{2m+1}
\cup P^{[0]}_{2m+1} ).
$$

The shift commuting map $\eta_{2m+1}$ is to map a point $p\in P^{[\beta]}_{2m+1}$ to the point 
$q \in P^\circ_{2m+1}(F)$
that is given by
$$
q_{(i-2m-1, i ]}= f^-(p)\beta^-(0)f^+(p)\Xi( \beta^-f^\circ(p) \beta^-(1)), \quad i \in   \mathcal I^{(1)}(p).
$$

With words
$$
 g^{(\beta)}(q) \in \{ \beta^-\beta^+ \}^\ast, \quad c(q) \in \mathcal C(0), 
 \quad g(q) \in \mathcal C^\ast,
$$
that are  given by
\begin{align*}
q_{(i-2m-1, i ]}= 
g^{(\beta)}(q)c(q)g(q)\beta^-(0),
 \quad i \in \mathcal I^{(0)}(q),
\end{align*}
a point  $p\in P^{[\beta]}_{2m+1}$ can be reconstructed from its image $q$ under $\eta_{2m+1}$ as the point  $p\in P^{(\circ)}_{2m+1}$, that is given by
\begin{align*}
p_{(i-2m-1, i ]}= 
g^{(\beta)}(q)  \Xi^{-1}  (c(q))   g(q)\beta^-(0),
 \quad i \in \mathcal I^{(0)}(q).
\end{align*}

We note that
  \begin{multline*}
  \nu_0(\lambda(\eta_{2m+1}(p)_{[0, 2m+1)}) )= 1, \quad
   \nu_1(\lambda(\eta_{2m+1}(p)_{[0, 2m+1)}))  = 0, \tag {P.01$[\beta]$}
 \\
 p \in P^{[\beta]}_{2m+1}.
   \end{multline*}
  
We set
$$
P^{(0)}_{2m} = \emptyset, \quad m > 3,
$$
and for $n> 5$ we set
$$
P^{(1)}_n = \{p\in P^\circ_n(\alpha_-(0)\alpha_-(1)) \setminus P^{(0)}_n:  \mathcal J^{(1)}  (p) \neq \emptyset \}.
$$
The shift commuting map $\eta_n$ is to map a point $p \in P^{(1)}_n $ to the point $q \in P_n(F)$
that is obtained by replacing in the point $p$ each of the words $b\in \mathcal B{(1)}$  
that appear at the indices in $\mathcal J_\circ^{(1)}(p)$ by the word
$
\Xi (b).
$

A point $p\in P^{(1)}_n$ can be reconstructed from its image  $q$ under $\eta_n$ by replacing in $q$ the word $c(q) \in  \mathcal C(0) $, that
 is identified as the unique word in $\mathcal C^\ast$ of maximal length that appears in $q$, by the word $\Xi^{-1}(c(q)).$
  
  We note that
   \begin{align*}
(\nu_0(\lambda(\eta_n(p)_{[0, n)})  ,\nu_1(\lambda(\eta_n(p)_{[0, n)}) )=   (\kappa_p ,\kappa_p   - 1 ) , \quad p \in  P^{(1)}_n .\tag {P.01.1}
\end{align*}
  
We set
$$
P^{(0,1)}_n = \{p\in P^\circ_n(\alpha_-(0)\alpha_-(1))\setminus ( P^{(0)}_n\cup P^{(1)}_n  )
:  \mathcal J^{(0,1)}  (p) \neq \emptyset \}.
$$

The shift commuting map $\eta_n$ is to map a point  $p \in P^{(0,1)}_n $ to the point $q \in P_n$
that is obtained by replacing in the point $p$  the words $b\in\mathcal B(1)$ that appear in $p$ at the indices in
 $\mathcal J_\circ^{(0,1)}(p)$ by the word
$
\Phi_0(\Xi(b)).
$

A point $p\in P^{(0,1)}_n$ can be reconstructed from its image $q$ under $\eta_n$ by replacing in $q$ the word 
$$
\beta^-(0)h(q)b(q)\in \beta^-(0)  \mathcal C^\ast\mathcal B(0,0),
$$
whose prefix $ \beta^-(0)h(q) \beta^-(0) $ is identified as the unique word in $ \mathcal B(0,0)$ of maximal length, that appears openly in $q$, by the word
$$
 \beta^-(0)h(q)  \Xi^{-1}(\Phi_0^{-1}(b(q))). 
 $$

We note that 
\begin{align*}
(\nu_0(\lambda(\eta_n(p)_{[0, n)})  ,\nu_1(\lambda(\eta_n(p)_{[0, n)}) )= 
 (\kappa_p +2,\kappa_p   -1  ), \quad
 p \in P^{(0,1)}_n .\tag {P.01.01}
\end{align*}

We set
$$
P^{(1,0)}_n =  P^\circ_n(\alpha^-(0)\alpha^-(1))\setminus ( P^{(0)}_n\cup P^{(1)}_n\cup P^{(0,1)}_n   ).
$$

With the  words 
$
 b(p) \in \mathcal B(1),
  f(p) \in \mathcal C^\ast
$ 
that are given by writing the  word
in $\mathcal D{(1,0)}$ 
that appears  at the indices 
in $\mathcal J_\circ^{(1,0)}(p)$ as
$
b(p) f(p) \beta^-(0) ,
$
the shift commuting map $\eta_n$ is to map a point $p \in P^{(1,0)}_n $ to the point $q \in P_n(F)$
that is obtained by replacing in the point $p$  the words in $\mathcal D{(1,0)}$,
 that appear at the indices in $\mathcal J_\circ^{(1,0)}(p)$ by the word
$
\Phi_0(\Xi(b(p)))
 f(p)\beta^-(0).
$

A point $p \in P^{(1,0)}_n $ can be reconstructed from its image $q$ under $\eta_n$ by replacing in $q$ the word  
$$ 
  b(q)\beta^-(0) h (q)\beta^-(0)  \in
 \mathcal B(0,0)\mathcal C ^\ast\beta^-(0), 
 $$  
 whose suffix $ \beta^-(0) h (q)\beta^-(0))$ is identified as the unique word of maximal length in 
 $\mathcal B(0,0)$, that  appears openly  in $q$,
   by the word  
  $$
 \Xi^{-1}(\Phi_0^{-1}(b(q)))h(q) \beta^-(0).
  $$
   
We note that 
\begin{align*}
(\nu_0(\lambda(\eta(p)_{[0, n)})  ,\nu_1(\lambda(\eta(p)_{[0, n)}) )=  (\kappa_p +2,\kappa_p   -1 ), \quad
 p \in  \eta_n(P^{(1,0)}_n ).\tag {P.01.10}
\end{align*}
  
We have produced a partition 
\begin{align*}
P^\circ_n(\alpha(0)\alpha(0))  = P^{(0)}_n \cup P^{(1)}_n\cup P^{(0,1)}_n\cup P^{(1,0)}_n. \tag {P.01}
\end{align*}

In points  $q \in \eta_n( P^{(0,1)}_n)$ the unique word in $\beta^-(0)\mathcal C^\ast\beta^-(0)$ of maximal length that appears openly  in $q$ is followed by a word in  $\mathcal C^\ast\ \beta^-(0)$, whereas  in points  $q \in \eta_n( P^{(1,0)}_n  )$ the unique  word in $\beta^-(0)
\mathcal C^\ast\beta^-(0)$ of maximal length that appears openly  in $q$ is followed by a word in  $\mathcal C^\ast\ \beta^-$. From this observation and from (P.01[0]), (P.01[1]), (P.01[$\beta$]) and  
(P.01.1),  (P.01.01),  (P.01.10)
it follows that the 
images under $\eta_n$ of the sets of the partition (P.01)
 are disjoint. From (P.01[0]), (P.01[1]), (P.01[$\beta$]) and  
(P.01.1),  (P.01.01),  (P.01.10) it follows  also that
 $$
 \eta_n(P^\circ_n(\alpha_-(0)\alpha_-(1)) \cap  P^\circ_n(\alpha_-(0)) = \emptyset.
 $$
 
We have shown that
$$
\card ( \mathcal O_n(\alpha_-(0)\alpha_-(1))) \leq
 \card ( \bigcup_{\widetilde\mu\in
  \mathcal M \setminus \{ \alpha(0)\alpha(1) \} }
\mathcal O_n(\widetilde\mu^-).
$$
Apply Lemma (c).
\end{proof}

%section 4
\section{The remaining multipliers}

\begin{lemma2} 
Besides the multipliers
$\alpha (0)$ and $ \alpha (1)$
the Fibonacci-Dyck shift has no exceptional multipliers.
\end{lemma2}

\begin{proof} 
Consider a multiplier
 \begin{align*}
 \mu \not \in \{\alpha (0), \alpha (1), \alpha (0) \alpha (1)\}, 
 \end{align*}
  of $F$.
We construct shift commuting injections
$$
\eta_{n}  :P_{n}^\circ(\mu_-)
\rightarrow
 \bigcup_{\tilde\mu\in \mathcal M\setminus \{\mu\}}
P^\circ_{n}(\tilde\mu_-),    \quad n > 4.
$$

\indent
We set
$$
P^{(0,0)}_n = \{p\in P^\circ_n(\mu) :  \mathcal J^{(0,0)}  (p) \neq \emptyset \}.
$$
 
With the word 
$b(p) \in  \mathcal B{(0,0)},$
that appears in $p$  at the indices in $ \mathcal J_\circ^{(0,0)}(p)$, 
the shift commuting map $\eta_n$ is to map a point $p \in P^{(0,0)}_n $ to the point $q \in P_n$,
that is obtained by replacing in  $p$   the words $b(p)$ in $  \mathcal B{(0,0)}$, that appear in $p$  at the indices in $ \mathcal J_\circ^{(0,0)}(p)$
 by the word
$
\Phi_0^{-1}(b(p)).
$

The point $p$ can be reconstructed from its image  $q$ under $\eta_n$ by replacing in $q$ the word 
$
c(q),
 $
that is identified as the unique word in $\mathcal C(0)$ of maximal length, that appears in $q$,  by the word $ \Phi_0(c(q) ).$
 
  We note that
 \begin{align*}
( \nu_0(\lambda(\eta_n(p)_{[0, n)}), \nu_1(\lambda(\eta_n(p)_{[0, n)}))) =
(\kappa_p  \nu_0(\mu)-2   ,  \kappa_p  \nu_1(\mu)    ), \quad p \in P^{(0,0)}_n. \tag {0.0}
\end{align*}

We set
$$
P^{(1,1)}_n = \{p\in P^\circ_n(\mu)\setminus P^{(0,0)}_n :  \mathcal J^{(1,1)}  (p) \neq \emptyset \}.
$$

With the words
$$
b(p), \in \mathcal B(1), \quad f(p) \in  \mathcal C^\ast   , \quad f^{\circ}(p) \in  \mathcal C^\circ(1)^\ast,
$$
that are given by writing the word in $\mathcal D(1,1)$,
that appears  in $p$ at the indices in $ \mathcal J_\circ^{(1,1)}(p)$, as
$$
b(p)  f(p)  \beta^-f^{\circ} (p) \beta^-(1),
$$
the shift commuting map $\eta_n$ is to map a point $p \in P^{(1,1)}_n $ to the point $q \in P_n(F)$,
that is obtained by replacing in  $p$  the words in $\mathcal D(1,1)$,
that appear  in $p$ at the indices in $ \mathcal J_\circ^{(1,1)}(p)$,
 by the word
$$
\Phi_0(\Xi(b(p)) f(p)\beta^+(0) \Psi(f^{\circ}(p) ) \beta^-(0).
$$

A point $p \in P^{(1,1)}_n $ can be reconstructed from its image  $q$ under $\eta_n$ by replacing in $q$ the word
$$
\beta^-(0)h^-(q) \beta^-(0)h(q)\beta^+(0) h^+(q) \beta^-(0),
  $$ 
 that is identified as the word with the uniquely determined word  $\beta^-(0)h(q)\beta^+(0) \in \mathcal C(0)$ of maximal length, that appears in $q$, as  infix,  a uniquely determined, openly in $q$ appearing word $ h^+(q) \beta^-(0) \in \mathcal C(0)^\ast\beta^-(0)$, as suffix, and  a uniquely determined word
 $\beta^-(0)h^-(q) \in\beta^-(0)\mathcal C(0)^\ast$ as prefix,  
   by the word 
$$
\beta^-\Psi^{-1}(h^-(q) )\beta^-(1)  h(q)   \beta^- \Psi^{-1}(h^+(q) ) \beta^-(0).
$$

We note that
\begin{multline*}
( \nu_0(\lambda(\eta_n(p)_{[0, n)})), \nu_1(\lambda(\eta_n(p)_{[0, n)}))) =
(\kappa_p  \nu_0(\mu) +2  ,  \kappa_p  \nu_1(\mu)  -2  ), \tag{1.1}\\
 p \in P^{(1,1)}_n.
\end{multline*}

We denote by
$P^{(0,1,0)}_n$
the set of points in 
$$  P^\circ_n(\mu)\setminus( P^{(0,0)}_n\cup P^{(1,1)}_n   )
$$ 
such that $ \mathcal J^{(1)}  (p) \neq \emptyset $, and such that the word in $\mathcal B{(1)}$, that appears at the indices in $ \mathcal J_\circ^{(1)}(p)$,  is preceded in $p$ by a word in 
$ \beta^-(0)\mathcal C^\ast $, and followed in $p$ by a word in $\mathcal C^\ast  \beta^-(0) $.

With the word
$ f(p) \in \mathcal C^\ast,$
that is given by writing 
the  openly appearing word in 
$\mathcal C^\ast\beta^-(0)$, that follows
the word  $b(p) \in\mathcal B{(1)}$ that appears  in $p$ at the indices in $ \mathcal J_\circ^{(1)}(p)$, as  $f(p)\beta^-(0)$,
the shift commuting map $\eta_n$ is to map a point $p \in P^{(0,1,0)}_n $ to the point $q \in P_n(F)$
that is obtained by replacing in the point $p$  the words in $\mathcal B{(1)}$ that appear  in $p$ at the indices in  $ \mathcal J^{(1)}_\circ(p)$, together with the
  openly appearing words in 
$\mathcal C^\ast\beta^-(0)$, that follow them,
 by the word
$
\Xi(b(p))f(p)\beta^+(0).
$

Denoting for a point $p \in P^{(0,1,1)}_n $  by $q^\prime \in F$ the point, that is obtained from its image $q$ under $\eta_n$ by replacing in $q$ the unique word 
$c(q)\in \mathcal C(0)$ of maximal length, that appears in $q$,  by the word 
$\Phi(c(q))$, one sees, that
the point $p $ can be reconstructed from $q$  by replacing in the point $q^\prime$ the unique word $c(q^\prime )\in \mathcal C(0)$ of maximal length, that appears in $q^\prime $, by the word $\Xi^{-1}(c(q^\prime))$.

 We note that
 \begin{multline*}
( \nu_0(\lambda(\eta_n(p)_{[0, n)})), \nu_1(\lambda(\eta_n(p)_{[0, n)}))) = 
(\kappa_p  \nu_0(\mu) -2  ,  \kappa_p  \nu_1(\mu)  -1  ),  \tag {0.1.0} \\  p\in P^{(0,1,0)}_n.
\end{multline*}

We denote by
$P^{(\bullet,1,1)}_n$ 
the set of points $p$ in 
$$ 
 P^\circ_n(\mu_-)\setminus (P^{(0,0)}_n \cup P^{(0,1,0)}_n)
$$ 
such that $ \mathcal J^{(1)}  (p) \neq \emptyset $, and such that the words in $\mathcal B{(1)}$ that appear  at the indices in $ \mathcal J_\circ^{(1)}(p)$  are followed in $p$ by a word in 
$\mathcal C^\ast \beta^-\mathcal C^\circ(1)^\ast \beta^-(1) $.

The shift commuting map $\eta_n$ is to map a point $p \in P^{(\bullet,1,1)}_n $ to the point $q \in P_n(F)$
that is obtained by replacing in  $p$  the words 
$b(p) \in \mathcal B{(1)}$, that appear in $p$ at the indices in $ \mathcal J_\circ^{(1)}(p)$, by the word
$\Phi_0(\Xi(b(p))$.

A point $p \in P^{(\bullet,1,1)}_n$ can be reconstructed from its image  $q$ under $\eta_n$ by replacing in $q$ the word 
$
h(q)
  $ 
 that is identified as the unique word in $\mathcal B(0,0)$ of maximal length that appears in $q$,  by the word $ \Xi  ^{-1}(\Phi_0^{-1}(h(q)))$.

 We note that
 \begin{multline*}
( \nu_0(\lambda(\eta_n(p)_{[0, n)}), \nu_1(\lambda(\eta_n(p)_{[0, n)})= 
(\kappa_p  \nu_0(\mu) +2  ,  \kappa_p  \nu_1(\mu)  -1  ),  \tag {$\bullet$.1.1} \\ p \in P^{(\bullet,1,1)}_n.
\end{multline*}

We denote by
$P^{(1,1,\bullet)}_n$
the set of points in 
$$ 
 P^\circ_n(\mu_-)\setminus (P^{(0,0)}_n \cup P^{(0,1,0)}_n\cup P^{(\bullet,1,1)}_n)
$$ 
such that $ \mathcal J^{(1)}  (p) \neq \emptyset $, and such that the word in $\mathcal B{(1)}$ that appears  at indices in $ \mathcal J_\circ^{(1)}(p)$,  is preceded in $p$ by a word in 
$\beta^-\mathcal C^\circ(1)^\ast \beta^-(1)\mathcal C^\ast $.

The shift commuting map $\eta_n$ is to map a point $p \in P^{(1,1,\bullet)}_n $ to the point $q \in P_n(F)$
that is obtained by replacing in  $p$  the words  $b(p) \in \mathcal B{(1)}$, that appear in 
$p$ at the indices in $ \mathcal J_\circ^{(1)}(p)$, by the word
$\Xi(b(p))$.

A point $p\in P^{(1,1,\bullet)}_n$ can be reconstructed from its image  $q$ under $\eta_n$ by replacing in $q$ the word 
$ 
c(q)
  $
 that is identified as the unique word in $\mathcal C(0)$ of maximal length, that appears in $q$, by the word $\Xi^{-1}(c(q) ) .$

We note that
 \begin{align*}
(\nu_0(\lambda(\eta_n(p)_{[0, n)}), \nu_1(\lambda(\eta_n(p)_{[0, n)}) =
(\kappa_p  \nu_0(\mu)   ,  \kappa_p  \nu_1(\mu)  -1  ),     \tag {1.1.$\bullet$} 
\quad  p \in P^{(1,1,\bullet)}_n.
\end{align*}

We set
$$
P^{(0,1)}_n =\{p\in P^\circ_n(\mu_-)\setminus (P^{(0,0)}_n \cup P^{(0,1,0)}_n
\cup P^{(\bullet,1,1)}_n)\cup P^{(1,1,\bullet)}):   \mathcal J^{(1,0)}(p) \neq \emptyset \}.
$$

 The shift commuting map $\eta_n$ is to map a point $p \in P^{(0,1)}_n $ to the point $q \in P_n(F)$,
that is obtained by replacing in  $p$  the words $b(p) \in  \mathcal B{(1)}$
  that appear at the indices in $\mathcal J_\circ^{(0,1)}(p)$, by the word
  $\Phi_0(\Xi(b(p)))$.

The point $p\in P^{(0,1)}_n$ can be reconstructed from its image  $q$ under $\eta_n$ by replacing in $q$ the word 
$
\beta^-(0)g(q)\beta^-(0)
  $
 that is identified as the unique word in $\mathcal B(0,0)$ of maximal length, that appears in $q$,  
 by the word
$\beta^-(0)g(q) \beta^-$,
and the open appearances $h(q)\beta^-(0)$ of a word in $\mathcal C(0)^\ast \beta^-(0)$, that follow in $q$ the word 
$\beta^-(0)g(q)\beta^-(0)$,
 by the word $\Psi^{-1}(h(q) ) \beta^-(1).$   
  
We note that
 \begin{align*}
( \nu_0(\lambda(\eta(p)_{[0, n)})), \nu_1(\lambda(\eta(p)_{[0, n)}))) = 
(\kappa_p  \nu_0(\mu) +2  ,  \kappa_p  \nu_1(\mu)  -1  ),    \tag{0.1}   \ \  p \in P^{(0,1)}_n.
\end{align*}

We set
$$
P^{(1,0)}_n = P^\circ_n(\mu_-)\setminus (P^{(0,0)}_n \cup P^{(0,1,0)}_n\cup P^{(\bullet,1,1)
}_n\cup P^{(1,1,\bullet)}\cup P^{(0,1)}_n).
$$
 
With the words
$$
 b(p) \in \mathcal B, \quad f(p) \in \mathcal C^\ast,
 $$
that are given by writing the word in $ \mathcal D{(1,0)}$,
  that appears at the indices in $\mathcal J_\circ^{(1,0)}(p)$, as
  $$
  b(p)f (p) \beta^-(0),
  $$
the shift commuting map $\eta_n$ is to map a point $p \in P^{(1,0)}_n $ to the point 
$q \in P_n(F)$,
that is obtained by replacing in  $p$  the words  $b(p)f (p) \beta^-(0)$,
  that appear at the indices in $\mathcal J_\circ^{(1,0)}(p)$, by the word 
$$
\Phi_0(\Xi(b(p)) f(p)  \beta^+(0).
$$

The point $p\in P^{(1,0)}_n$ can be reconstructed from its image  $q$ under $\eta_n$ by replacing in $q$ the word 
$\beta^-(0)g(q)\beta^+(0)$,
 that is identified as the unique word in $\mathcal C(0)$ of maximal length, that appears in $q$, by the word
$  \beta^-(1)g(q)\beta^-(0)$, and the word $\beta^-(0)h(q) \in \beta^-(0) \mathcal C^\ast$  that precedes the word  $\beta^-(0)g(q)\beta^+(0)$ in $q$, by the word
  $ \beta^- \Psi^{-1}(h(q) )$.
  
We note that
 \begin{align*}
( \nu_0(\lambda(\eta(p)_{[0, n)})), \nu_1(\lambda(\eta(p)_{[0, n)}))) = 
(\kappa_p  \nu_0(\mu)   ,  \kappa_p  \nu_1(\mu)  -1  ), \quad  p \in P^{(1,0)}_n.\tag {1.0}
\end{align*}

We have produced a partition
\begin{align*}
P^\circ_n(\mu_-)=  P^{(0,0)}_{n}  \cup P^{(0,1,0)}_{n} 
\cup P^{(\bullet,1,1)}_{n} \cup P^{(1,1,\bullet)}_{n}
\cup P^{(0,1)}_{n}\cup P^{(1,0)}_{n}. \tag {P}
\end{align*}

In a point $q\in\eta_{n}(P^{(\bullet,1,1)}_{n})$ the word in $\mathcal B(0,0)$ of maximal length, that appears in $q$, is followed  by an open appearance of a word in $\mathcal C^\ast\mathcal B{(1)}$,
   whereas in the points $q\in\eta_{n}(P^{(0,1)}_{n})$ this word is followed by an open 
   appearance of a word in $\mathcal C^\ast\beta^-(0)$. 
   Also, in a point $q\in\eta_{n}(P^{(1,1,\bullet)}_{n})$ the word in $\mathcal C(0)$ of maximal length, that appears in $q$, is preceded  by  a word in $\mathcal B{(1)}\mathcal C^\ast$,
   whereas in a point  $q\in\eta_{n}(P^{(1,0)}_{n})$ this word is preceded  by a word in 
   $\beta^-(0)\mathcal C^\ast$.
     
 It follows from these observations and from  
(00), (0,1,0), ($\bullet$,1,1), (1,1,$\bullet$), (0,1), (1,0),
 that the images under $\eta_{n}$ of the elements of the partition (P) are disjoint. From 
 (0,0), (0,1,0), ($\bullet$,1,1), (1,1,$\bullet$), (0,1), (1,0), it follows also, that
$$
\eta_n(P^\circ_n(\mu_-)) \cap  P^\circ_n(\mu_-)  = \emptyset.
$$

We have shown that
$$
\card \  \mathcal O_n(\mu_-) \leq \card ( \bigcup_{\widetilde{\mu}\in \mathcal M \setminus \{ \mu^- \}}  \mathcal O_n(\widetilde{\mu}_-)),
$$
The lemma follows now from Lemma (c) and Lemma 1.
\end{proof}

%section 5
\section{An embedding theorem}

Let  $ P_k(\alpha(0))$ denote the set of points of $F$ of period $k$ with  multiplier $\alpha(0)$,  let  $ P_k(\alpha(1))$ denote the set of points of $F$ of period $k$ with  multiplier $\alpha(1)$, and let $P_k(\bold 1)$ denote  the set of points $F$  periodic points of period $k$, that are neutral.
Denote by $\zeta_\bold 1$ the zeta function of the neutral periodic points of $F$, by $\zeta_{\alpha(0)}$ the zeta function of the  periodic points of $F$ with multiplier $\alpha(0)$,
and   by $\zeta_{\alpha(1)}$ the zeta function of the  periodic points of $F$ with multiplier $\alpha(1)$. 

\begin{embedding lemma}
\begin{align*}
& 
 \liminf_{k \to \infty}
\tfrac{1}{k}
 \log \card \  ( P_k(\bold 1)\cup  P_k(\alpha(0)) = 
\\
& 
 \liminf_{k \to \infty}
\tfrac{1}{k} \log \card \  ( P_k(\bold 1)\cup  P_k(\alpha(1)) = 
\tfrac{3}{2}\log 3 - \log 2.
\end{align*}
\end{embedding lemma}

\begin{proof} Set
$$
\xi(z) = \frac{2}{\sqrt{3}} 
\sin(\frac{1}{3}\arcsin\frac{3\sqrt{3}}{2} z), 
\qquad 0 \leq z \le
\frac{2}{3 \sqrt{3}}.
$$
By \cite [(4.8)] {KM} we have the generating functions
\begin{align*}
 g_ {C^\circ(1)^\star}(z)=  \frac{\xi(z)}{z},   \tag {5.1}
\end{align*}
and
\begin{align*}
g_{C^\star}(z)  =  \frac{\xi(z)^2}{z^2},  \tag {5.2}
\end{align*}
and it follows that
\begin{align*}
\zeta_\bold 1(z) =  g_{\mathcal C^\circ(1)^\star}(z)g_{\mathcal C^\star}(z) =  \frac{\xi(z)^3}{z^3}.  \tag {5.3}
\end{align*}
Using the circular code $\mathcal C^\star\beta^-(0)$ one finds from (5.2) that
\begin{align*}
\zeta_{\alpha(0)}(z) =( \frac{z}{z - \xi(z)^2})^2,\tag {5.4}
\end{align*}
and using the circular code  $\beta^-(1) \mathcal C^\star \beta^- \mathcal C^\circ(1)^\star$  one finds from(5.1) and (5.2)  that
\begin{align*}
\zeta_{\alpha(1)}(z) =( \frac{z}{z - \xi(z)^3})^2.\tag {5.5}
\end{align*}
(E.g. see \cite [Section 5]{P} or \cite [Section 2]{KM}).
The lemma follows from (5.4), (5.5) and (5.3).
\end{proof}

Let  $ P^+_k$ denote the set of points of $F$ of period $k$  with positive multiplier,
 $k\in \Bbb N$. Let  $\mathcal O^+_k$ denote the set of orbits of length $k$ of $F$ with positive multiplier, let  
$\mathcal O_k(\alpha(0))$ denote the set of  orbits of length $k$ of $F$ with  multiplier
$\alpha(0)$,  let $\mathcal O_k(\alpha(1))$ denote the set of  orbits of length $k$ of $F$ with  multiplier
$\alpha(1)$, and
let $\mathcal O_k(\bold 1)$ denote  the set of  orbits of length $k$ of $F$, that are neutral, $k\in \Bbb N$. 

\begin{embedding}
Let $Y$ be an irreducible subshift of finite type. Let $\mathcal O_k(Y)$ be its set of periodic orbits of length $k \in \Bbb N$, and let $h_Y$ be its entropy.
An embedding of Y into the Fibonacci-Dyck shift exists
 if and only if at least one of the following conditions is satisfied:
\bigskip

\noindent
(a)
$$
\card  ( \mathcal O_k(Y)) \leq \card (\mathcal O_k(\bold 1) \cup  \mathcal O_k(\alpha(0))  ), \quad k \in \Bbb N, 
$$
and 
$$
 h(Y) <\tfrac{3}{2}\log 3 - \log 2.
 $$
 
 \noindent
(b)
$$
\card  ( \mathcal O_k(Y) )\leq \card (\mathcal O_k(\bold 1) \cup  \mathcal O_k(\alpha(1))  ), \quad k \in \Bbb N, 
$$
and 
$$
 h(Y) <\tfrac{3}{2}\log 3 - \log 2.
 $$
 
 \noindent
 (c)
 $$
\card  ( \mathcal O_k(Y)) \leq \card (\mathcal O_k(\bold 1) \cup  \mathcal O_k^+  ),  \quad k \in \Bbb N, 
$$
 and 
$$
 h(Y) <3\log 2 - \log 3.
 $$
\end{embedding}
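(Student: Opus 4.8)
The plan is to derive the theorem by specializing the general embedding criterion of \cite{HIK} to the Fibonacci-Dyck shift $F$, the inputs being the determination of the exceptional multipliers of $F$ provided by Lemma 1 and Lemma 2 and the growth rates furnished by the preceding Lemma. Since $F$ has Property $(A)$ by \cite[Section 2]{HK}, it falls within the scope of \cite{HIK}. The criterion of \cite{HIK} characterizes the existence of an embedding of an irreducible subshift of finite type $Y$ into such a target by a disjunction of conditions: a ``generic'' condition bound to a single sign, under which $Y$ is routed into the periodic points that are neutral or carry a multiplier of that sign, and, for each exceptional multiplier $\mu$, one further condition under which $Y$ is routed into the periodic points that are neutral or carry the multiplier $\mu$ with either sign now admitted. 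Each alternative couples an entropy inequality $h(Y) < h_\ast$, with $h_\ast$ the $\liminf$ growth rate of the admitted family of periodic points, to the orbit inequalities $\card(\mathcal O_k(Y)) \le \card(\mathcal O_k^\ast)$, $k\in\Bbb N$, where $\mathcal O_k^\ast$ is that admitted family at period $k$. The first step is to recall this criterion in the precise form required.

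By Lemma 2 the only exceptional multipliers of $F$ are $\alpha(0)$ and $\alpha(1)$; every other multiplier is non-exceptional and so contributes nothing beyond the generic condition. The disjunction therefore collapses to exactly three alternatives. Because the time reversal $\rho$ interchanges the two signs and, by Lemma (a), preserves the orbit counts, the generic condition may be read off the positive side, with admitted family $\mathcal O_k(\bold 1)\cup\mathcal O_k^+$; this is condition (c). The two alternatives attached to $\alpha(0)$ and to $\alpha(1)$, admitting both signs of the relevant multiplier, have admitted families $\mathcal O_k(\bold 1)\cup\mathcal O_k(\alpha(0))$ and $\mathcal O_k(\bold 1)\cup\mathcal O_k(\alpha(1))$, giving the orbit inequalities of (a) and (b).

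It remains to identify the three thresholds. For the families that are neutral or carry $\alpha(0)$, respectively $\alpha(1)$, the preceding Lemma evaluates the $\liminf$ growth rate as $\tfrac{3}{2}\log 3-\log 2$, the threshold in (a) and (b). For the positive family $\mathcal O_k(\bold 1)\cup\mathcal O_k^+$ one carries out the computation of the preceding Lemma in its positive form: summing the relevant zeta functions over all positive multipliers produces, by the method of \cite[Section 5]{P} or \cite[Section 2]{KM}, a dominant singularity at $z=3/8$, hence growth rate $3\log 2-\log 3$. Since $3\log 2-\log 3 > \tfrac{3}{2}\log 3-\log 2$, this positive family dominates the neutral part and carries the full entropy $h(F)=3\log 2-\log 3$, the threshold in (c). Inserting these thresholds into the three alternatives produces exactly (a), (b), (c), and the theorem follows.

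The step I expect to be the crux is not the entropy bookkeeping but the faithful transcription of the \cite{HIK} criterion, in particular the verification that the small-period exceptionality of $\alpha(0)$ and $\alpha(1)$ is precisely what prevents (a) and (b) from being absorbed into (c). Concretely, at a period $k$ at which $\alpha(0)$ is exceptional (by Lemma (b) one has $k\in\{1,3,5\}$) Lemma (c) gives $\card(\mathcal O_k(\alpha(0)_-)) > \card(\bigcup_{\widetilde\mu\neq\alpha(0)}\mathcal O_k(\widetilde\mu_-))$, and combining this with Lemma (a) one finds $\card(\mathcal O_k(\bold 1)\cup\mathcal O_k(\alpha(0)))-\card(\mathcal O_k(\bold 1)\cup\mathcal O_k^+)=\card(\mathcal O_k(\alpha(0)_-))-\card(\bigcup_{\widetilde\mu\neq\alpha(0)}\mathcal O_k(\widetilde\mu_-))>0$; thus (a) admits strictly more short orbits than (c) while demanding a strictly smaller entropy, and symmetrically (b) against (c) at $k=2$. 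One must confirm that these trade-offs, fed into the criterion of \cite{HIK}, reproduce the stated disjunction with no alternative redundant or missing, and that the strict entropy inequalities are the correct boundary behaviour.
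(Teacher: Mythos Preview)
Your proposal is correct and follows essentially the same route as the paper: invoke Theorem 5.8 of \cite{HIK}, feed in the determination of the exceptional multipliers from Lemmas 1 and 2 (together with Lemma (a)), and evaluate the three entropy thresholds. The one noteworthy difference is your handling of the threshold in (c): you propose summing zeta functions over all positive multipliers and locating a singularity at $z=3/8$, whereas the paper takes the shorter path of observing, via Lemma (a), that $\card(P_k(\bold 1)\cup P_k^+)$ and $\card(P_k)$ have the same exponential growth rate, so the threshold is simply the topological entropy $h(F)=3\log 2-\log 3$ already computed in \cite[Section 4]{KM}. Your added paragraph explaining why (a) and (b) are not absorbed into (c) is correct and illuminating but is not needed for the proof itself, since the \cite{HIK} criterion already delivers the disjunction in exactly this form.
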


\begin{proof} 
The theorem results from an application of  Theorem 5.8 of \cite {HIK}, that uses Lemma (a) and Lemma 1, Lemma 2 and  Lemma 3, and that takes into account that only the exceptional multipliers, in this case the multipliers $\alpha(0)$ and $\alpha(1)$, contribute to the possibility of an embedding beyond the case of negative (or positive) multipliers.
In Theorem 5.8 of \cite {HIK} the entropy condition in (a) reads
$$
 h(Y) < \liminf _{k \to \infty}\tfrac{1}{k} \log \card  ( P_k(\bold 1)\cup P_k(\alpha(0))),
 $$
the entropy condition in (b) reads
$$
 h(Y) < \liminf _{k \to \infty}\tfrac{1}{k} \log \card  ( P_k(\bold 1)\cup P_k(\alpha(1))),
 $$
and  the entropy condition in (c) reads
$$
 h(Y) < \liminf _{k \to \infty}\tfrac{1}{k} \log \card  ( P_k(\bold 1)\cup P^+_k).
 $$
For (a) and (b)
apply Lemma 5.1.
For (c) note  that by  Lemma (a) the right-hand side of the inequality is equal to the topological entropy of $F$, which is known to be $3\log 2 - \log 3$ \cite [Section 4]{KM}.
\end{proof}

By Lemma (a)
\begin{align*}
&\card ( \mathcal O_k^+) = \tfrac {1}{2}\card ( \mathcal O_k \setminus 
 \mathcal O_{k}(\bold 1)), \tag {5.6}
\\
&\card ( \mathcal O_k(\alpha(0)) = 2 \card ( \mathcal O_k(\alpha^-(0)), 
\\
&\card ( \mathcal O_k(\alpha(1)) = 2 \card ( \mathcal O_k(\alpha^-(1)), \qquad k \in \Bbb N.
\end{align*}
Denote the set of points of $F$ of period $k$ by $P_k,k\in \Bbb N$, and denote by $\zeta$ the zeta function of $F$. 
The sequence 
$(\card( \mathcal O_k))_{k \in \Bbb N}$ can be obtained by M\"obius inversion
from the sequence $(\card (P_k)_{k \in \Bbb N}$ that enters into the zeta function
$
\zeta(z) = e^{\sum_{n \in {\Bbb N}} \frac{\card (P_n)z^n}{n}}
$
 of $F$.
The same  applies to the sequences 
$$
\card (\mathcal O_k(\bold 1) )_{k \in \Bbb N},  \
\card (   P_k(\alpha(0))  )_{k \in \Bbb N}, \
\card (  \mathcal O_k(\alpha(1))  )_{k \in \Bbb N}.
 $$
Therefore, by (5.6), the information that is relevant for each of the conditions (a), (b) and (c) of Theorem (5.1) is (in principle) contained in the zeta functions $ \zeta, \zeta_{ \bold 1}, $ and $\zeta_{\alpha (0)},\zeta_{\alpha (1)}.$
$ \zeta$ is  also  known. From \cite [(4.12)]{KM}
$$
\zeta(z) = \frac {\xi(z)}{z(2  \xi(z)^2 +\xi(z) -1   )^2}.
$$
We note that the zeta function  $\zeta_{ +}(z)$ of the  periodic point with positive multiplier is also known: By Lemma (a)
$$
\zeta_{ +}(z)= \sqrt{\zeta_\bold 1(z)^{-1}\zeta(z)} = \frac{\xi(z)^2}{z^2
(2  \xi(z)^2 +\xi(z) -1   )}.
$$

%section6
\section{The multiplier $\alpha(0)$}
\begin{proposition0} 
The multiplier $\alpha(0)$  is exceptional only at periods one, three and  five.
\end{proposition0}

\begin{proof}  
By Lemma (b) the multiplier $\alpha(0)$ is  
is not exceptional at periods two, four and six. Let
\begin{align*}
n \geq 7.  \tag a
\end{align*}

We construct a shift commuting injection 
$$
\eta_n:P^\circ_n(\alpha_-(0))\rightarrow \bigcup_{\widetilde\mu\in\mathcal M \setminus\{ \alpha(0) \} }
P^\circ_n(\widetilde\mu_-).
$$

Set 
$$
I(i) = \max \{i^{(0)}
 \in   \mathcal I^{(0)}: i^{(0)} < i  \},
 \quad i \in \mathcal I^{(0)}(p), \qquad p \in P^\circ_n(\alpha^-(0)).
 $$
 
 \medskip
Let $P^{(1)}_n$ be the set of points 
$
p\in P^\circ_n(\alpha^-(0))
$
such that the set $\mathcal I^{\langle 1 \rangle}$  of indices $i^0 \in \mathcal I^{(0)}$, such that $p_{(I(i), i  ) }\in \mathcal Q_1$, is not empty.

For $p \in P^{(1)}_n$ we denote by $\mathcal I^{\langle 1 \rangle}_\circ(p)$ the set of indices $i^{(1)}_\circ \in \mathcal I^{\langle 1 \rangle}(p)$, such that the word 
$p_{(I(i^{(1)}_\circ), i^{(1)}_\circ]}$ is lexicographically the smallest  among the words
$$
p_{(I(i^{(1)}), i^{(1)}]},\qquad i^{(1)} \in \mathcal I^{\langle 1 \rangle}(p).
$$

With the word $f(p)\in \mathcal Q_1$, that is given by 
writing  
$$
p_{(I(i^{(1)}_\circ), i^{(1)}_\circ]}= f(p)\beta_-(0),\qquad i_\circ^{(1)} \in\mathcal I^{\langle 1 \rangle}_\circ(p) ,
$$
the shift commuting map $\eta_n$ is to map a point $p \in P^{(1)}_n$ to the point $p \in P^\circ_n(F)$
that is obtained by replacing in the point $p$  the words
$
f(p)\beta_-(0)$, that appear at the indices in $\mathcal I^{\langle 1 \rangle}_\circ(p)$
by $\Delta_1(f)\beta_-(0)$.

For $q \in \eta_n(P^{(1)}_n)$ there is a unique word $b(q) \in\mathcal B(1,1)$, that appears openly in $q$, and
a point $p\in P^{(1)}_n$ can be reconstructed from from its image $q$ under $\eta_n$ by replacing in $q$ the word  $b(q)$,  when it appears openly  in $q$, by the word  
$\Phi_1^{-1}(b(q))$. 

We note that
 \begin{align*}
( \nu_0(\lambda(\eta(p)_{[0, n)})), \nu_1(\lambda(\eta(p)_{[0, n)}))) = 
(\kappa_p   ,  2 ),\quad p \in P^{(1)}_n.\tag {1}
\end{align*}

 \medskip

Let $P^{(\beta )}_n$ be the set of points 
$$
p\in P^\circ_n(\alpha^-(0))\setminus P^{(1)}_n)
$$
such that the set $\mathcal I^{\langle \beta \rangle}(p)$ 
of indices $i^{(\beta)} \in \mathcal I^{(0)}(p)$ at which there appears a word in
$\beta^-\beta^+(\mathcal C(0)^\ast \setminus \{ \epsilon \})\beta^-(0) $ 
is not empty.
 
For $p \in P^{(\beta)}_n$ we denote by $\mathcal I^{\langle \beta \rangle}_\circ(p)$ the set of indices $i^{(\beta)}_\circ \in \mathcal I^{\langle \beta \rangle}(p)$ such that the word 
$p_{(i^{(\beta)}_\circ-n, i^{(\beta)}_\circ]}$ is lexicographically the smallest  among the words
$$
p_{(i^{(\beta)}-n, i^{(\beta)}]}, \quad  i^{(\beta)}\in \mathcal I^{\langle \beta \rangle}(p).
$$

With the word $f(p)\in \mathcal \mathcal C(0)^\ast$, that is given by 
writing  
$$
p_{(I(i^{(\beta)}_\circ), i^{(\beta)}_\circ]}= f(p),\qquad i_\circ^{(\beta)} \in\mathcal I^{\langle \beta \rangle}_\circ(p) ,
$$
the shift commuting map $\eta_n$ is to map a point $p \in P^{(\beta)}_n$ to the point $p \in P^\circ_n(F)$
that is obtained by replacing in the point $p$  the word $\beta^-\beta^+f(p)
 $, that appears in $p$ at the indices in
$
\mathcal I^{\langle \beta \rangle}_\circ(p)-1,
$
by the word $\beta^-\Psi(f(p))\beta^-(1)$.

For a  point $q\in \eta_n(P^{(\beta)}_n )$ there is a unique word $b(q) \in  \mathcal B(1)$,  that appears openly  in $q$, and
a point $p\in P^{(\beta)}_n$ can be reconstructed from  its image $q$ under $\eta_n$ by replacing in $q$ the word $b(q)$, when it appears openly in $q$,
by the word  
$\beta^-\beta^+\Psi^{-1}(b(q))$. 

We note that
\begin{align*}
( \nu_0(\lambda(\eta(p)_{[0, n)})), \nu_1(\lambda(\eta(p)_{[0, n)}))) = 
(\kappa_p    ,  2 ),\quad  p \in P^{(\beta)}_n.\tag {$\beta$}
\end{align*}

 \medskip

Let $P^{(\beta, 0)}_n$ be the set of points 
$$
p\in P^\circ_n(\alpha^-(0))\setminus (P^{(1)}_n \cup P^{(\beta)}_n)
$$
such that the set $\mathcal I^{\langle \beta, 0 \rangle}(p)$ 
of indices $i^{(\beta, 0)} \in \mathcal I^{(0)}(p)$ at which there appears openly  the word 
$\beta^-\beta^+\beta^-(0) $ 
is not empty.

For $p \in P^{(\beta, 0)}_n$ we denote by $\mathcal I^{\langle \beta, 0 \rangle}_\circ(p)$ the set of indices $i^{(\beta, 0)}_\circ \in \mathcal I^{\langle \beta, 0 \rangle}(p)$
such that the word 
$p_{(i^{(\beta, 0)}_\circ-n, (i^{(\beta, 0)}_\circ]}$ is lexicographically the smallest  among the words
$$
p_{(i^{(\beta, 0)}-n, i^{(\beta, 0)}]},\qquad i^{(\beta, 0)} \in \mathcal I^{\langle \beta, 0 \rangle}(p).
$$

The shift commuting map $\eta_n$ is to map a point $p \in P^{(\beta, 0)}_n$ to the point $p \in P^\circ_n(F)$
that is obtained by replacing in the point $p$  the word $\beta^-\beta^+$, that appears in $p$ at the indices in
$
\mathcal I^{\langle\beta, 0 \rangle}_\circ(p)-1,
$
by the word $\beta^-\beta^-(1)$.

In a point of $\eta_n(P^{(\beta, 0)}_n)$ the word $\beta^-\beta^-(1)$ appears openly,
and a  point $p\in P^{(\beta, 0)}_n$ can be reconstructed from  its image $q$ under $\eta_n$ by replacing in $q$ the word 
$\beta^-\beta^-(1)$,  when it  appears openly  in $q$, by the word  
$\beta^-\beta^+$. 

We note that
\begin{align*}
( \nu_0(\lambda(\eta(p)_{[0, n)})), \nu_1(\lambda(\eta(p)_{[0, n)}))) = 
(\kappa_p    ,  2 ),\quad p \in P^{(\beta,0)}_n.\tag {$\beta$.0}
\end{align*}

\medskip

Let $P^{(0,2)}_n$ be the set of points 
$$
p\in P^\circ_n(\alpha^-(0))\setminus
 (P^{(1)}\cup P^{(\beta)}_n\cup P^{(\beta, 0)}_n),
$$
such that 
$$
\nu_0(p) \geq 2.
$$

For $p \in P^{(0,2)}_n$ we denote by $\mathcal I^{\langle 0,2 \rangle}_\circ(p)$ the set of indices $i \in \mathcal I^{(0)}(p)$, such that
$$
i - I(i) > 1.
$$

For $p \in P^{( 0,2)}_n$ we denote by $\mathcal I^{\langle  0,2 \rangle}_\circ(p)$ the set of indices 
$i^{(0,2)}_\circ \in \mathcal I^{\langle 0,2 \rangle}(p)$ such that the word 
$p_{(i^{(0,2)}_\circ-n, i^{(0,2)}_\circ]}$ is lexicographically the smallest  among the words
$$
p_{(i^{(0,2)}-n, i^{(0,2)}]},\qquad  i^{(0,2)} \in \mathcal I^{\langle 0,2 \rangle}(p).
$$

With the word $f(p) \in \mathcal C(0)^\ast$, that is given by writing
$$
p_{[I(i^{\langle  0,2 \rangle}_\circ),i^{\langle  0,2 \rangle}_\circ ]} = \beta^-(0) f(p)\beta^-(0) , \qquad i^{(0,2)}   \in \mathcal I^{\langle  0,2 \rangle}_\circ(p),
$$
the shift commuting map $\eta_n$ is to map a point $p \in P^{(0,2)}_n$ to the point $p \in P^\circ_n(F)$
that is obtained by replacing in the point $p$  the words $\beta^-(0) f(p)\beta^-(0)$, that appear in $p$ at the indices in $ \mathcal I^{\langle  0,2 \rangle}_\circ(p)$ by the word 
$ \beta^-\Psi^{-1}(f(p)) \beta^-(1)$.

For a  point $q\in \eta_n(P^{( 0,2)}_n )$ there is a unique word
 $b(q) \in  \mathcal B(1)$,  that appears openly  in $q$, and
a point $p\in P^{( 0,2)}_n$ can be reconstructed from  its image $q$ under $\eta_n$ by replacing in $q$ the word $b(q)$, when it appears openly in $q$,
by the word  
$\beta^-\Psi^{-1}(b(q))\beta^-$. 

We note that
 \begin{align*}
( \nu_0(\lambda(\eta(p)_{[0, n)})), \nu_1(\lambda(\eta(p)_{[0, n)}))) = 
(\kappa_p - 2   ,  1 ), \quad p \in P^{(0,2)}_n.\tag {0.2}
\end{align*}
 
\medskip

Let $P^{(0,1,l)}_n$ be the set of points 
$$
p\in P^\circ_n(\alpha^-(0))\setminus  
(P^{(1)}\cup P^{(\beta)}_n\cup P^{(\beta, 0)}_n\cup P^{(0,2)}_n),
$$
such that
$$
p_{(i-n, i]} \in \beta^-(0)\beta^+(0)\mathcal C(0)^\ast \beta^-(0), \qquad i \in\mathcal I^{(0)}(p).
$$

With the word $f(p) \in \beta^-(0)\beta^+(0)\mathcal C(0)^\ast \beta^-(0)$, that is given by writing
$$
p_{(i-n, i]}= \beta^-(0)\beta^+(0)f(p) \beta^-(0), \qquad i \in \mathcal I^{(0)}(p),
$$
the shift commuting map $\eta_n$ is to map a point $p \in P^{(0,1,l)}_n$ to the point 
$q \in P^\circ_n(F)$
that is given by
$$
q_{(i-n, i]}=  \beta^-\beta^-(1) f(p)\beta^-(0), \qquad i \in \mathcal I^{(0)}(p).
$$

For a  point $q\in \eta_n(P^{( 0,1,l)}_n )$ one has
$$
q_{(i-n, i]}\in  \beta^-\beta^-(1) \mathcal C(0)^\ast \beta^-(0), \qquad i \in \mathcal I^{(0)}(q),
$$
and with the word $ a(q) \in \mathcal C(0)^\ast$ that is given by writing
$$
 q_{(i-n, i]}=   \beta^-\beta^-(1)a(q) \beta^-(0), \qquad i \in \mathcal I^{(0)}(q),
$$
a point $p\in P^{( 0,1,l)}_n$ can be reconstructed from its image $q$ under $\eta_n$ as the point in $P^\circ_n(F)$  that is given by
 $$
 p_{(i-n, i]}=   \beta^-(0)\beta^+(0)a(q)) \beta^-(0), \qquad i \in \mathcal I^{(0)}(q).
$$

We note that
 \begin{align*}
( \nu_0(\lambda(\eta(p)_{[0, n)})), \nu_1(\lambda(\eta(p)_{[0, n)}))) = 
(1  ,  1 ), \quad p \in P^{(0,1,l)}_n.\tag {0.1.l}
\end{align*}

 \medskip

Let $P^{(0,1,m)}_n$ be the set of points 
$$
p\in P^\circ_n(\alpha^-(0))\setminus (P^{(1)}_n\cup P^{(\beta)}_n
\cup P^{(\beta, 0)}_n\cup P^{(0, 2)}_n),
$$
such that
$$
p_{(i-n, i]} \in  \beta^-(0)(\mathcal C(0)^\ast \setminus\{ \epsilon  \}) \beta^+(0)  
(\mathcal C(0)^\ast \setminus\{ \epsilon  \})\beta^-(0), \qquad i \in \mathcal I^{(0)}(p). 
$$

With the words $f(p) \in \mathcal C(0)^\ast,g(p) \in \mathcal C(0)^\ast$, that are given by writing
$$
p_{(i-n, i]}=
\beta^-(0)f(p) \beta^+(0)  g(p)\beta^-(0),\qquad i \in \mathcal I^{(0)}(p),
$$
the shift commuting map $\eta_n$ is to map a point $p \in P^{(0,1,m)}_n$ to the point $q \in P^\circ_n(F)$
that is given by
$$
q_{(i-n, i]}= \beta^-(1)f(p) \beta^-(0)g(p)\beta^- , \qquad i \in \mathcal I^{(0)}(p).
$$

For a  point $q\in \eta_n(P^{( 0,1.m)}_n )$ one has
$$
q_{(i-n, i]}\in  \mathcal C(0)^\ast \beta^-(0) \mathcal C(0)^\ast\beta^- \beta^-(1), 
\qquad i \in \mathcal I^{(1)}(q),
$$
and with the words $a(q)\in  \mathcal C(0)^\ast, b(q)\in \mathcal C^\ast$, that are given by writing
$$
q_{[i, i + n}=  a(q) \beta^-(0) b(q)\beta^- \beta^-(1). \qquad i \in 
\mathcal I^{(1)}(q),
$$
a point $p\in P^{( 0,1,0)}_n$ can be reconstructed from its image $q$ under $\eta_n$  
as the point that is given by
$$
p_{(i-n, i]}=a(q) \beta^+(0)b(q)\beta^-(0) \beta^-(0), \qquad i \in 
\mathcal I^{(1)}(q).
$$

We note that
 \begin{align*}
( \nu_0(\lambda(\eta(p)_{[0, n)})), \nu_1(\lambda(\eta(p)_{[0, n)}))) = 
(1  ,  1 ), \quad  p \in P^{(0,1,0)}_n.\tag {0.1.m}
\end{align*}

 \medskip
 
Let 
$
P^{(0,1,r)}_n$ be the set of points
$$
p\in P^\circ_n(\alpha^-(0))\setminus (P^{(1)}_n\cup P^{(\beta)}_n
\cup P^{(\beta, 0)}_n\cup P^{(0, 2)}_n),
$$
such that
$$
p_{(i-n, i]} \in  \beta^-(0) \beta^-(0)(\mathcal C(0)^\ast \setminus\{ \epsilon  \}) \beta^+(0)  
\mathcal C(0)^\ast \beta^+(0)\beta^-(0), \qquad i \in \mathcal I^{(0)}(p). 
$$

With the words $f(p) \in \mathcal C(0)^\ast,g(p) \in \mathcal C(0)^\ast$, that are given by writing
$$
p_{(i-n, i]}=
\beta^-(0)f(p) \beta^+(0)  g(p)\beta^-(0),\qquad i \in \mathcal I^{(0)}(p),
$$
the shift commuting map $\eta_n$ is to map a point $p \in P^{(0,1,r)}_n$ to the point $q \in P^\circ_n(F)$
that is given by
$$
q_{(i-n, i]}=\beta^-\beta^-(1)f(p)\beta^-(0)g(p)\beta^-(0)\beta^-(0) , \qquad i \in \mathcal I^{(0)}(p).
$$

For a  point $q\in \eta_n(P^{( 0,1,r)}_n )$ one has
$$
q_{(i-n, i]}\in  \mathcal C(0)^\ast \beta^-(0) \mathcal C(0)^\ast
 \beta^-(0) \beta^-(0)\beta^- \beta^-(1), 
\qquad i \in \mathcal I^{(1)}(q),
$$
and with the words $a(q)\in  \mathcal C(0)^\ast, b(q)\in \mathcal C(0)^\ast$, that are given by writing
$$
q_{[i, i + n}=  a(q) \beta^-(0) b(q)\beta^-(0)\beta^-(0)\beta^- \beta^-(1). \qquad i \in 
\mathcal I^{(1)}(q),
$$
a point $p\in P^{( 0,1,r)}_n$ can be reconstructed from its image $q$ under $\eta_n$  
as the point that is given by
$$
p_{(i-n, i]}=a(q) \beta^-(0) b(q) \beta^-(0)\beta^- \beta^-(1), 
\qquad i \in \mathcal I^{(1)}(q).
$$

We note that
 \begin{align*}
( \nu_0(\lambda(\eta(p)_{[0, n)})), \nu_1(\lambda(\eta(p)_{[0, n)}))) = 
(3  ,  1 ), \quad  p \in P^{(0,1,r)}_n.\tag {0.1.r}
\end{align*}

Let $P^{(0,1,r,\beta)}_n$ be the set of points 
$$
p\in P^\circ_n(\alpha^-(0))\setminus  
(P^{(1)}\cup P^{(\beta)}_n\cup P^{(\beta, 0)}_n\cup P^{(0,2)}_n),
$$
such that
$$
p_{(i-n, i]} \in \beta^-(0)\beta^-(0)\beta^+(0)
(\mathcal C(0)^\ast  \setminus\mathcal C(0)^\ast)
\beta^+(0)\beta^-(0), \quad i \in\mathcal I^{(0)}(p).
$$

With the word $f(p) \in  \mathcal C(0)^\ast $, that is given by writing
$$
p_{(i-n, i]}=  \beta^-(0)\beta^-(0)\beta^+(0)
f(p)
\beta^+(0)\beta^-(0), \quad i \in \mathcal I^{(0)}(p),
$$
the shift commuting map $\eta_n$ is to map a point $p \in P^{(0,1,r,\beta))}_n$ to the point 
$q \in P^\circ_n(F)$
that is given by
$$
q_{(i-n, i]}=  \beta^-(0)\beta^-\beta^-(1) f(p)\beta^-(0)\beta^-(0), \qquad i \in \mathcal I^{(0)}(p).
$$

For a  point $q\in \eta_n(P^{( 0,1,r,\beta)}_n )$ one has
$$
q_{(i-n, i]}\in \mathcal C(0)^\ast \beta^-(0) \beta^-(0) \beta^-(0)\beta^-\beta^-(1),  \qquad i \in \mathcal I^{(1)}(q),
$$
and with the word $a(q) \in \mathcal C(0)^\ast$ that is given by writing
$$
 q_{(i-n, i]}=   a(q) \beta^-(0)\beta^-\beta^-(1), \qquad i \in \mathcal I^{(0)}(q),
$$
a point $p\in P^{( 0,1,r,\beta)}_n$ can be reconstructed from its image $q$ under $\eta_n$ as the point in $P^\circ_n(F)$  that is given by
 $$
 p_{(i-n, i]}= a(q)  \beta^+(0) \beta^-(0) \beta^-(0)\beta^+(0), \qquad i \in \mathcal I^{(0)}(q).
$$

We note that
 \begin{align*}
( \nu_0(\lambda(\eta(p)_{[0, n)})), \nu_1(\lambda(\eta(p)_{[0, n)}))) = 
(3  ,  1 ), \quad p \in P^{(0,1,r,\beta)}_n.\tag {0.1.r.$\beta$}
\end{align*}

Let $P^{(0,1,r,1))}_n$ be the set of points 
$$
p\in P^\circ_n(\alpha^-(0))\setminus  
(P^{(1)}\cup P^{(\beta)}_n\cup P^{(\beta, 0)}_n\cup P^{(0,2)}_n),
$$
such that
$$
p_{(i-n, i]} \in\beta^-(0) \beta^-(0)\beta^+(0)\mathcal C(0)^\ast\beta^+(0) \beta^-(0), \qquad i \in\mathcal I^{(0)}(p).
$$

With the word $f(p) \in \beta^-(0)\beta^+(0)\mathcal C(0)^\ast \beta^-(0)$, that is given by writing
$$
p_{(i-n, i]}= \beta^-(0) \beta^-(0)\beta^+(0)f(p)\beta^+(0) \beta^-(0), \quad i \in \mathcal I^{(0)}(p),
$$
the shift commuting map $\eta_n$ is to map a point $p \in P^{(0,1,r,1))}_n$ to the point 
$q \in P^\circ_n(F)$
that is given by
$$
q_{(i-n, i]}=  \beta^-(1)\beta^- \Psi^{-1}(f(p))\beta^-(1) \beta^-(0)\beta^-\, \qquad i \in \mathcal I^{(0)}(p).
$$

For a  point $q\in \eta_n(P^{( 0,1,r,1)}_n )$ one has
$$
q_{(i-n, i]}\in  \beta^-\beta^-(1) \beta^-\mathcal C(0)^\ast \beta^-(1)\beta^-(0), \qquad i \in \mathcal I^{(0)}(q),
$$
and with the word $ a(q) \in \mathcal C(0)^\ast$ that is given by writing
$$
 q_{(i-n, i]}=   \beta^-\beta^-(1)a(q) \beta^-(0), \qquad i \in \mathcal I^{(0)}(q),
$$
a point $p\in P^{( 0,1,r,\beta))}_n$ can be reconstructed from its image $q$ under $\eta_n$ as the point in $P^\circ_n(F)$  that is given by
 $$
 p_{(i-n, i]}=   \beta^-\beta^-(1) \beta^-\Psi(a(q)) \beta^-(1)\beta^-(0), \qquad i \in \mathcal I^{(0)}(q).
$$

We note that
 \begin{align*}
( \nu_0(\lambda(\eta(p)_{[0, n)})), \nu_1(\lambda(\eta(p)_{[0, n)}))) = 
(1  ,  2 ), \quad p \in P^{(0,1,r,0)}_n.\tag {0.1.r.0}
\end{align*}

An inspection of the definition of $P^{(0, 1, r,1)}_n$ shows that we have produced a partition 
 \begin{multline*}
P^\circ_n(\alpha_-(0)) =P^{(1)}_n\cup P^{(\beta,\beta)}_n\cup P^{(\beta)}_n\cup P^{(\beta,0)}_n\cup P^{(0, 2)}\cup\tag {P.0}
\\
P^{(0, 1,l)}_n\cup P^{(0, 1, m)}_n\cup P^{(0, 1, r)}_n\cup P^{(0,1,r,\beta)}_n\cup P^{(0,1,r,0)}_n.
\end{multline*}

The points of $\eta_n( P^{(1)}_n)$ are the only ones in $\eta_n( P^\circ_n(\alpha_-(0))$, in which there appears openly a word in $\mathcal B(1.1)$.
In the points of $\eta_n(P^{(\beta)}_n   \cup P^{(0,2)}_n)$ the word 
$\beta^-\beta^-(1)  $ does not appear openly, whereas in the words of 
$\eta_n(P^{(0, 1,l)}_n\cup P^{(0, 1, m)}_n\cup P^{(0, 1, r)}_n\cup P^{(0,1,r,\beta)}_n\cup P^{(0,1,r,0)}_n)$ this word does appear openly.
In the points of $\eta_n( P^{(\beta, 0)}_n)$ there appears the word $ \beta^-\beta^-(1) \beta^-(0)$ openly, whereas in the In the points of $\eta_n(P^{(0, 1,l)}_n\cup P^{(0, 1, m)}_n\cup P^{(0, 1, r)}_n)$ this word does not appear openly. However in the points of $\eta_n(P^{(0, 1,l)}_n\cup P^{(0, 1, m)}_n\cup P^{(0, 1, r)}_n)$ the word  $\beta^-\beta^-(1)  $  does appear openly.
The points of $\eta_n( P^{(0, 1, r,\beta)}_n)$ are the only ones in $\eta_n( P^\circ_n(\alpha_-(0))$, in which there appears openly a word in  $(\mathcal C^\ast  \setminus \mathcal C(0)^\ast)  \beta^-(0)$,
and   the points of $\eta_n( P^{(0, 1, r,1)}_n)$ are the only ones in $\eta_n( P^\circ_n(\alpha_-(0))$, in which there appears openly a word in $\beta^- \beta^-(1)\mathcal C(0)^\ast\mathcal B(1)\mathcal C(0)^\ast $.

From   these   observations and   from  (1), ($\beta$), ($\beta$.0), (0.2), and (0.1.l), (0.1.m), (0.1.r), (0.1.r.$\beta$), (0.1.r.0)
 it follows,  that the 
images under $\eta_n$ of the sets in the partition (P.0) are disjoint. Also 
$\eta_n(   P^\circ_n(\alpha_-(0) ))\cap P^\circ_n(\alpha_-(0) )  =\emptyset$.
We have shown that
$$
\card \  \mathcal O_n(\alpha^-(0)) \leq \card ( \bigcup_{\widetilde{\mu}\in \mathcal M \setminus \{ \alpha^- (0)\}}  \mathcal O_n(\widetilde{\mu}^-)).
$$
Apply Lemma (c).
\end{proof}

\section{The multiplier $\alpha(1)$}

\begin{proposition1} 
 The  multiplier $\alpha(1)$ is  exceptional only at period two.
\end{proposition1}

\begin{proof}  
We construct shift commuting injections
$$
\eta_n  :P_n^\circ(\alpha_-(1))
\rightarrow
 \bigcup_{\tilde\mu\in \mathcal M\setminus \{\alpha_-(1)\}}
P^\circ_n(\tilde\mu^-),    \quad n > 2.
$$

Let $n> 2$. Denote by $P^{(1)}_n$  the set of $p\in P^\circ_n(\alpha^-(1))$ such that the word $\beta^- \beta^-(1)$ appears openly in $p$. The shift commuting map $\eta_n$ is to map a point $p\in P^{(1)}_n$ to the point $q\in P_n(F)$ that is obtained by replacing in $p$ the word $\beta^- \beta^-(1)$,   when it appears openly in $p$,
by the word $ \beta^-(0)\beta^-(0)  $. 
A point $p\in P^{(1)}_n$ can be reconstructed from its image  $q$ under $\eta_n$ by replacing in $q$  every word 
$
  \beta^-(0)^{2K} $
that  appears  in $q$ openly, 
and that is neither preceded nor followed in $q$ by an open appearance of the symbol $\beta^-(0)$, by the word 
 $( \beta^- \beta^-(1) )^K $, $K \in \Bbb N$. 

Denote by $P^{(2)}_n$  the set of 
$$
p\in P^\circ_n(\alpha^-(1))\setminus P^{(1)}_n
$$
such that  words in
$ \beta^-\mathcal C^\circ(1)^\ast
\beta^-(1)$ appear openly in $p$ at least twice during a period.
Denote  for $p \in P^{(2)}_n $ by $\mathcal J(p)$ the set of indices $j \in \mathcal I^{(1)}(p)$ 
such that the word $p_{(n-j,n]}$ is lexicographically the smallest one among the words
$
p_{(n-i,n]}, i \in  \mathcal I^{(1)}(p).
$
With the word $f^\circ(p) \in \mathcal C^\circ(1)^\ast$, that is given by writing the word in 
$\beta^-(\mathcal C^\circ(1)^\ast\setminus\{ \epsilon \})\beta^-(1)$ that appears  in $p$ at an index in 
$  \mathcal J(p)$ as
\begin{align*}
\beta^- f^\circ(p) \beta^-(1), \tag {a}
\end{align*}
the shift commuting map $\eta_n$ is to map a point $p \in P^{(2)}_n$ to the point $q \in P_n(F)$
that is obtained by replacing in the point $p$ each of the words (a)
that appear in $p$ at an index in $ \mathcal J(p)$
by the word
$$
\beta^-(0)\Psi(f^\circ(p)(p)) \beta^-(0).
$$

With the word $f(q)\in\mathcal C(0)^\ast $ such that the word $ \beta^-(0)  f(q) \beta^-(0)$ appears openly in $q$,
the point $p$ can be reconstructed from its image $q$ under $\eta_n$ by replacing in $q$ the word
 $$
 \beta^-(0)  f(q) \beta^-(0)
 $$ 
when it appears  openly in $q$,  by the word 
$$
\beta^-\Psi^{-1}( f(q) ) \beta^-(1).
$$ 

Denote by 
$
P^{(3)}_n$  the set of 
$$
p\in P^\circ_n(\alpha^-(1))\setminus (P^{(1)}_n\cup P^{(2)}_n )
$$ 
such that
$$
p_{(i-n, i ]}\in(\mathcal C^\ast\setminus \{\epsilon\}) \beta^- \mathcal C^\circ(1)^\ast \beta^-(1).
$$
With the words
 $$
f(p) \in   \mathcal C^\ast ,     \quad f^\circ(p) \in   \mathcal C^\circ(1)^\ast   ,
 $$
 that are given by writing 
\begin{align*}
p_{(i-n, i ]}=f(p) \beta^-f^\circ(p)\beta^-(1),
\end{align*}
and denoting  by $\Psi^\prime(c)$
the word that is obtained by removing from the word $\Psi(f^\circ(p))$ its last symbol,
the shift commuting map $\psi_n$ is to map a point $p \in  P^{(3)}_n$  to the point $q\in P^\circ_n(F)$ that is given by
$$
q_{(i-n, i ]}  =f(p) \beta^-(0)\Psi^\prime(f^\circ(p))\beta^- \beta^-(1), \quad i \in \mathcal I^{(1)}(q).
$$

With the  words 
$$
f(q) \in \mathcal C^\ast\setminus \{ \epsilon \}, \quad
f^\prime(q)\in\mathcal C(0)^\ast\beta^-(0)\mathcal C(0)^\ast,
$$
that are given by writing
$$
q_{(i-n, i ]} =f(q)\beta^-(0)f^\prime(q) \beta^-\beta^-(1), \quad i \in \mathcal I^{(1)}(q),
$$
the point $p$ can be reconstructed from its image $q$ under $\eta_n$ as the point that is given by
$$
p_{(i-n, i ]}  =f(q)\beta^-\Psi^{-1}(f^\prime(q)\beta^-(0)) \beta^-(1), \quad i \in \mathcal I^{(1)}(q).
$$

Set 
$$
P^{(4)}_n= P^\circ_n(\alpha^-(1))\setminus (P^{(1)}_n\cup P^{(2)}_n\cup P^{(3)}_n).
$$ 
One has 
 $$
 p_{(i-n, i]} \in \beta^-(1)\beta^-\mathcal C^\circ(1)^\ast, \quad i \in \mathcal I^{(1)}(p),
 \qquad p \in P^{(4)}_n.
 $$
 With the words
 $$
 c(p) \in \mathcal C^\circ(1), \quad f^\circ(p) \in \mathcal C^\circ(1)^\ast,
 $$
 that are given by writing
  $$
 p_{(i-n, i]}=  \beta^-(1)\beta^-c(p)  f^\circ(p),\quad i \in \mathcal I^{(1)}(p),
 $$
the shift commuting map $\eta_n$ is to map a point $p \in P^{(4)}_n$ to the point $q \in P_n(F)$
that is given by
$$
q_{(i-n, i]} = \beta^-(1) \Phi_0(  \Psi ( c(p)))\Psi(f^\circ(p))\beta^-,\quad i \in \mathcal I^{(1)}(p).
$$

With the  words
$$
b(q) \in \mathcal B(0,0), \quad 
g(q)\in   \mathcal C (0)^\ast  ,
$$
that are given by writing
$$
q_{(i-n, i]} =\beta^-(1)b(q)g(q)\beta^-, \quad i \in \mathcal I^{(1)}(q),
$$
the point $p$ can be reconstructed from its image $q$ under $\eta_n$  as the point that is given by
$$
p_{(i-n, i]} =\beta^-(1)\beta^-   \Psi^{-1}    ( \Phi_0^{-1}(b(q)) )   \Psi^{-1}(g(q)),\quad i \in \mathcal I^{(1)}(q).
$$

We have produced a partition
\begin{align*}
P^\circ_n(\alpha_-(1)) = \bigcup_{1\leq l \leq 4}P^{(l) }_n . \tag {P.1}
\end{align*}
In the points in $\eta_n(P^{(1) }_n )$  there appears  openly the word 
$\beta^- (0)\beta^-(0)$, and  the word 
$\beta^- \beta^- (1)$ does not appear openly, and in
the points in $\eta_n(P^{(2) }_n ) $  neither the word $\beta^- (0)\beta^- (0)$ nor the word 
$\beta^- \beta^- (1)$ appears  openly.
In the points in $\eta_n(P^{(3) }_n) $ and $\eta_n(P^{(4) }_n )$ there appears the word 
$\beta^- \beta^- (1)$ openly. Also
\begin{align*}
&(\mathcal I^{(1)} (q) + 1  ) \cap \mathcal I^{(0)} (q) = \emptyset, \quad   
   q \in  \eta_n(P^{(3) }_n) ,
\\
&\mathcal I^{(1)} (q) + 1  \subset \mathcal I^{(0)}, \ \  \quad \quad \quad \quad  q \in \eta_n(P^{(4) }_n)   .
\end{align*}
From these observations  it follows  that the 
images under $\eta_n$ of the sets of the partition (P.1)
are disjoint.

We have shown that
$$
\card (  \mathcal O_n(\alpha^-(0))) \leq \card ( \bigcup_{\widetilde{\mu}\in \mathcal M \setminus \{ \alpha (1)\}}  \mathcal O_n(\widetilde{\mu}_-)).
$$
Apply Lemma (c).
\end{proof}

\medskip 

\par\noindent Toshihiro Hamachi
\par\noindent  Faculty of Mathematics
\par\noindent Kyushu University
\par\noindent 744 Motooka, 
 Nishi-ku,
Fukuoka 819-0395, Japan
\par\noindent hamachi@math.kyushu-u.ac.jp

\bigskip

\par\noindent Wolfgang Krieger,
\par\noindent Institute for Applied Mathematics,
\par\noindent  University of Heidelberg
\par\noindent Im Neuenheimer Feld 294, 
 69120 Heidelberg, Germany
\par\noindent krieger@math.uni-heidelberg.de


\begin{thebibliography}{9999}

 \bibitem [BP]{BP}
{\sc J.~Berstel, D.~Perrin},
{\it  The origins of combinatorics on words},
European J. of Combinatorics. 
{\bf  28}
(2007),
 996 -- 1022.
 
 \bibitem [BPR]{BPR}
{\sc J.~Berstel, D.~Perrin, Ch.~Reutenauer},
{\it  Codes and Automata},
 Cambridge University Press, Cambridge
(2010).

\bibitem[HI] {HI}
{\sc T.~Hamachi, K.~Inoue},
{\it Embeddings of shifts of finite type into the Dyck shift},
Monatsh. Math.
{\bf  145}
(2005),
107 -- 129.             

\bibitem[HIK] {HIK}
{\sc T.~Hamachi, K.~Inoue, W.~Krieger},
{\it Subsystems of finite type and semigroup invariants of subshifts},
J. reine angew. Math.
{\bf  632}
(2009),
37 -- 61.         

 \bibitem[HK] {HK}
{\sc T.~Hamachi, W.~Krieger},
{\it A construction of subshifts and a class of semigroups},
arXiv:1303.4158 [math.DS]
(2013)

\bibitem[Ki]{Ki}{\sc B.~P.~Kitchens},
{\it Symbolic dynamics}, Springer, Berlin, Heidelberg, New York
(1998).

\bibitem[Kr1]{Kr1}
{\sc  W.~Krieger},
{\it  On the uniqueness of the equilibrium state},
 Math. Systems Theory
  {\bf  8}
(1974),
97 -- 104.

\bibitem[Kr2]{Kr2}
{\sc  W.~Krieger},
{\it  On a syntactically defined invariant of symbolic dynamics},
Ergod. Th. \& Dynam. Sys.
{\bf  20}
(2000)
501 -- 516

  \bibitem[KM]{KM}
{\sc  W.~Krieger, K.~Matsumoto},
{\it  Zeta functions and topological entropy of the Markov-Dyck shifts},
  M\"unster J. Math.
  {\bf  4}
(2011),
 171--184.
 
\bibitem[LM]{LM}{\sc D.~Lind and B.~Marcus},
{\it An introduction to symbolic dynamics and coding},
 Cambridge University Press, Cambridge
(1995).

  \bibitem [M]{M}
{\sc K.~Matsumoto},
{\it  C*-algebras arising from Dyck systems of topological Markov chains},
Math. Scand. 
{\bf  109}
(2011),
 31 -- 54.
 
 \bibitem [NP]{NP}{\sc M. ~Nivat and J.-F. ~Perrot},
{\it  Une g\'{e}n\'{e}ralisation du mono\^{i}de bicyclique},
 C. R. Acad. Sc. Paris, 
 {\bf 271}
(1970), pp.\ 824--827.

\bibitem [P]{P}
{\sc D. ~Perrin},
{\it Algebraic combinatorics on words},
Algebraic Combinatorics and Computer Science, H.Crapo and 
G.-C.Rota, Eds. 
 Springer  2001, 
391--430.

\end{thebibliography}
 \end{document}